\DeclareSymbolFontAlphabet{\mathbb}{AMSb} 
\DeclareSymbolFontAlphabet{\mathbbl}{bbold}
\theoremstyle{definition}
\newtheorem{thm}{Theorem}[section]
\newtheorem{lem}[thm]{Lemma}
\newtheorem{cor}[thm]{Corollary}
\newtheorem{prop}[thm]{Proposition}
\theoremstyle{definition}
\newtheorem{rem}[thm]{Remark}
\newtheorem{dfn}[thm]{Definition}
\newtheorem{ex}[thm]{Example}
\newtheorem{construction}[thm]{Construction}
\newcommand{\bA}{\mathbb{A}}
\newcommand{\bG}{\mathbb{G}}
\newcommand{\bN}{\mathbb{N}}
\newcommand{\bZ}{\mathbb{Z}}
\newcommand{\bF}{\mathbb{F}}
\newcommand{\cA}{\mathcal{A}}
\newcommand{\cE}{\mathcal{E}}
\newcommand{\cF}{\mathcal{F}}
\newcommand{\cH}{\mathcal{H}}
\newcommand{\cM}{\mathcal{M}}
\newcommand{\cN}{\mathcal{N}}
\newcommand{\cO}{\mathcal{O}}
\newcommand{\cP}{\mathcal{P}}
\newcommand{\sC}{\mathscr{C}}
\newcommand{\BT}{\mathrm{BT}}
\newcommand{\et}{\text{\'{e}t}}
\newcommand{\Ext}{\mathrm{Ext}}
\newcommand{\fin}{\mathrm{fin}}
\newcommand{\fs}{\mathrm{fs}}
\newcommand{\Hom}{\mathrm{Hom}}
\newcommand{\Ker}{\mathrm{Ker}}
\newcommand{\kfl}{\mathrm{kfl}}
\newcommand{\Spec}{\mathrm{Spec}}
\newcommand{\rk}{\mathrm{rk}}
\begin{document}

\title[Slope filtrations of log $p$-divisible groups]{Slope filtrations of log $p$-divisible groups}

\author{Kentaro Inoue}
\address{Department of Mathematics, Faculty of Science, Kyoto University, Kyoto 606-8502, Japan}
\email{keninoue0123@gmail.com}

\begin{abstract}
Oort-Zink proved that a $p$-divisible group over a normal base in characteristic $p$ with constant Newton polygon is isogenous to a $p$-divisible group admitting a slope filtration. In this paper, we generalize this result to log $p$-divisible groups. 
\end{abstract}

\maketitle

\section{Introduction}

Zink proved that a $p$-divisible group over a regular $\bF_p$-scheme with constant Newton polygon is isogenous to a $p$-divisible group admitting a slope filtration (\cite{zin01}), and Oort and Zink generalized this result to the case of a normal base (\cite{oz02}). First, we briefly recall the result of Oort-Zink. Let $S$ be a locally noetherian normal $\bF_p$-scheme. Let $G$ be a $p$-divisible group over $S$ with constant Newton polygon. Then there exists a completely slope divisible $p$-divisible group $H$ and an isogeny $G\to H$. Here a $p$-divisible group $H$ over $S$ is called \emph{completely slope divisible} if $H$ admits a filtration of $p$-divisible subgroups
  \[
  0=H_0\subset H_1 \subset \dots \subset H_m=H,
  \]
  and if there are integers
  \[
  s\geq r_1> r_2 >\dots >r_m\geq 0
  \]
  such that the following conditions are satisfied:
  \begin{enumerate}
      \item The quasi-isogeny $p^{-r_i}F^s\colon H_i\to H_{i}^{(p^s)}$ is an isogeny for $i=1, \dots ,m$.
      \item The $p$-divisible group $H_{i}/H_{i-1}$ is isoclinic of slope $r_i/s$ for $i=1, \dots ,m$.
  \end{enumerate}
The above theorem plays an important role in the theory of foliations in the moduli space of abelian varieties in characteristic $p$ (see \cite{oor04}).

In this paper, we shall prove a log version of this theorem. \emph{Log $p$-divisible groups} are degenerating objects of $p$-divisible groups in the framework of logarithmic geometry. In \cite{kkn08} and \cite{kkn21}, Kajiwara-Kato-Nakayama introduced the notion of \emph{log abelian varieties}, which are degenerating objects of abelian varieties in the framework of logarithmic geometry, and realized the toroidal compactification of the moduli space of abelian varieties constructed by Faltings-Chai as the fine moduli of log abelian varieties. In the same way as non-log case, the $p$-torsion part of a log abelian variety is a log $p$-divisible group. We expect that the study in this paper is helpful to study the structure of the toroidal compactification of the moduli space of abelian varieties in characteristic $p$.

The following is one of the main results of this paper.

\begin{thm}[see Corollary \ref{first main thm}]\label{first main intro}
    Let $S$ be a normal fs log scheme over $\bF_{p}$. Then, for a log $p$-divisible group $G$ over $S$ with constant Newton polygon, there exist a completely slope divisible log $p$-divisible group $H$ and an isogeny $G\to H$.
\end{thm}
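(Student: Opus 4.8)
The plan is to bootstrap from the classical Oort--Zink theorem (\cite{oz02}) applied to the underlying classical $p$-divisible group of $G$, and then to re-install the logarithmic data on top of the slope filtration so produced. Concretely, I would first invoke (or establish, from the structure theory of log $p$-divisible groups developed in the body of the paper) a presentation of $G$, on $S$ and possibly after a Kummer-log-flat localization, as an extension
\[
0 \to G^{\circ} \to G \to \underline{E} \to 0,
\]
where $G^{\circ}$ is a \emph{classical} $p$-divisible group over the underlying scheme $\overset{\circ}{S}$ and $\underline{E}$ is the logarithmic \'{e}tale part attached to the character lattice of the log structure. The point of this presentation is that $\underline{E}$ is isoclinic of slope $0$, while the multiplicative part (of slope $1$) is already contained in $G^{\circ}$; thus all of the interesting slopes live in the classical object $G^{\circ}$, and the log structure contributes only a slope-$0$ quotient.

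Since $S$ is normal, $\overset{\circ}{S}$ is a locally noetherian normal $\bF_{p}$-scheme, so Oort--Zink is available for $G^{\circ}$ once I check that $G^{\circ}$ has constant Newton polygon. I expect this to follow from the constancy of the Newton polygon of $G$: at each geometric point the polygon of $G$ is obtained from that of $G^{\circ}$ by adjoining the slope-$0$ segment coming from $\underline{E}$, whose length is locally constant, so constancy of the total polygon forces constancy of the classical polygon. Applying \cite{oz02} to $G^{\circ}$ then yields a completely slope divisible classical $p$-divisible group $H^{\circ}$, with filtration $0=H^{\circ}_{0}\subset\dots\subset H^{\circ}_{k}=H^{\circ}$ and integers $s\geq r_{1}>\dots>r_{k}\geq 0$ satisfying conditions (1) and (2), together with an isogeny $\varphi^{\circ}\colon G^{\circ}\to H^{\circ}$. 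I would then define $H$ as the pushout of $H^{\circ}\leftarrow G^{\circ}\to G$ along $\varphi^{\circ}$, so that $H$ sits in an extension $0\to H^{\circ}\to H\to\underline{E}\to 0$ and the induced map $\varphi\colon G\to H$ is an isogeny (a kernel computation shows $\Ker\varphi\cong\Ker\varphi^{\circ}$).

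It remains to exhibit $H$ as completely slope divisible. I would take the filtration of $H$ whose lower steps are the $H^{\circ}_{i}$ for $i<k$ and whose top step is all of $H$; the successive quotients are then the isoclinic pieces $H^{\circ}_{i}/H^{\circ}_{i-1}$ of slope $r_{i}/s$ for $i<k$, and, at the top, an extension of the slope-$0$ object $\underline{E}$ by the slope-$0$ object $H^{\circ}_{k}/H^{\circ}_{k-1}$, which is again isoclinic of slope $0=r_{k}/s$ by additivity of the Newton polygon in exact sequences (if $G$ has no classical \'{e}tale part one instead appends $\underline{E}$ as a separate slope-$0$ step). Condition (2) is then immediate, and condition (1) for the lower steps is inherited verbatim from $H^{\circ}$, since the $H^{\circ}_{i}\subset H^{\circ}\subset H$ are classical subgroups on which $p^{-r_{i}}F^{s}$ is unchanged; for the top step it reduces to checking that $F^{s}\colon H\to H^{(p^{s})}$ is divisible by $p^{r_{k}}$ as a morphism of log $p$-divisible groups, which holds on $H^{\circ}$ by construction and on $\underline{E}$ because the relative Frobenius of a slope-$0$ \'{e}tale-type object is already an isogeny.

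I expect the main obstacle to be purely logarithmic, in two respects. First, one must make sense of conditions (1) and (2) --- the relative Frobenius $F$, the twist $(-)^{(p^{s})}$, and the quasi-isogeny $p^{-r_{i}}F^{s}$ --- entirely within the category of log $p$-divisible groups, and verify that the pushout $H$, the subobjects of the filtration, and the map $\varphi$ are genuine log objects and log morphisms; in particular the slope-$0$ step mixing the classical \'{e}tale part of $H^{\circ}$ with the logarithmic \'{e}tale part $\underline{E}$ will require a direct slope-divisibility computation in the log setting rather than a citation. Second, if the extension presentation of $G$ is only available Kummer-log-flat locally, I would need to descend the construction of $H$ and of $\varphi$ back to $S$; here I expect normality of $S$ to supply the rigidity (uniqueness of the slope filtration and of the isogeny target) that lets the locally constructed data glue. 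The classical content, by contrast, is entirely black-boxed through \cite{oz02}, so the novelty of the argument is confined to the logarithmic bookkeeping.
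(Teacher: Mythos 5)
Your proposal is essentially the paper's own argument: the paper's Theorem \ref{conn et seq global} supplies a global exact sequence $0\to G^{0}\to G\to G^{\et}\to 0$ in which $G^{0}$ is a classical formal Lie group carrying all the positive slopes and $G^{\et}$ is a classical \'etale (hence slope-zero) quotient --- note that \emph{both} terms are classical and the logarithmic content sits entirely in the extension class, rather than in a separate ``logarithmic \'etale part'' $\underline{E}$ as you describe --- after which Oort--Zink is applied to $G^{0}$ to produce $\phi\colon G^{0}\to H'$ and one takes $H:=G/\Ker(\phi)$, which is exactly your pushout. Since $\Ker(\phi)\subset G^{0}$, the top graded piece of $H$ is $G^{\et}$ itself, so the slope-zero bookkeeping you worry about at the top step is immediate, and no Kummer-log-flat descent is needed because the sequence already exists over all of $S$.
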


This result can be deduced from the corresponding result of Oort-Zink by using the fact that a completely slope divisible log $p$-divisible group is written as an extension of a classical \'{e}tale $p$-divisible group by a classical completely slope divisible formal Lie group (see Proposition \ref{dec of csd}). 

Next, we consider a stronger statement. Oort-Zink proved the above result by extending an isogeny generically defined. We prove that we can make the same procedure for log $p$-divisible groups.

\begin{thm} [see Theorem \ref{ext of isog over log regular base}]\label{main intro}
    Let $S$ be a log regular fs log scheme over $\bF_p$ and $U$ be a dense open subscheme of $S$. Let $G$ be a log $p$-divisible group over $S$. Suppose that we are given a completely slope divisible log $p$-divisible group $H_{U}$ over $U$ and an isogeny $G|_{U}\to H_{U}$. Then there exist a completely slope divisible log $p$-divisible group $H$ over $S$ and an isogeny $G\to H$ which restricts to the given isogeny $G|_{U}\to H_{U}$.
\end{thm}

When $\mathring{S}$ is regular, this theorem follows from a result on the extendability of complete slope divisible log $p$-divisible groups (see Proposition \ref{purity of csd}). In general cases, we reduce the problem to the above case by using resolutions of toric singularities in \cite{niz06}.

Note that we need the assumption of log regularity. Theorem \ref{main intro} does not hold under the assumption of Theorem \ref{first main intro} (for a counterexample, see Remark \ref{need log reg}).

The outline of this paper is as follows. In Section \ref{sec 2}, we review the definitions of Kummer log flat topology, log vector bundles, log finite group schemes, and log $p$-divisible groups, and we prove some fundamental properties of them. In Section \ref{sec 3}, we introduce the notion of completely slope divisible log $p$-divisible groups and Newton polygons for log $p$-divisible groups. In Section \ref{sec 4}, we review the definition and properties of log regular fs log scheme, and we study log finite group schemes and log vector bundles on log regular fs log schemes for the proof of Theorem \ref{ext of isog over log regular base}. In Section \ref{sec 5}, we shall prove Theorem \ref{ext of isog over log regular base}.

\subsection*{Notation and convention}\noindent

\begin{enumerate}
    \item Throughout this paper, $p$ is a prime number.
    \item All monoids are commutative. For a monoid $M$, let $\overline{M}\coloneqq M/M^{\times}$, and let $M^{\rm{gp}}$ be an abelian group associated to $M$. We say that $M$ is \emph{sharp} if $M^{\times}=1$.
    \item For an integer $n\geq 1$, a monoid $M$ equipped with a monoid map $\times n\colon M\to M$ is denoted by $M^{1/n}$.
    \item For a log scheme $S$, the structure sheaf of monoids is denoted by $\cM_{S}$. The underlying scheme of $S$ is denoted by $\mathring{S}$. 
    \item Let $\cP$ be a property of schemes, including quasi-compact, quasi-separated, finite, and locally noetherian. We say that a log scheme $S$ satisfies $\cP$ if the underlying scheme $\mathring{S}$ satisfies $\cP$. 
    \item Let $\cP$ be a property of morphisms of schemes. We say that a morphism of log schemes $T\to S$ satisfies $\cP$ if the morphism of the underlying schemes $\mathring{T}\to \mathring{S}$ satisfies $\cP$.  
    \item For a monoid $M$, the log scheme $\mathrm{Spec}\bZ[M]$ equipped with the log structure associated with a natural monoid map $M\to \cO_{\mathrm{Spec}\bZ[M]}$ is denoted by $\bA_{M}$.
    \item A chart $P\to \cM_{S}$ is called \emph{neat} at $s\in S$ if $P\to \overline{\cM_{S, \bar{s}}}$ is an isomorphism. For an fs log scheme $S$ and a point $s\in S$, there exists an fs chart $P\to \cM_{S}$ which is neat at $s$ by \cite[Chapter II, Proposition 2.3.7]{ogu18}
    \item Unless otherwise specified, fiber products of saturated log schemes are taken in the category of saturated log schemes. For morphisms of saturated log schemes $S_{1}\to S_{3}$ and $S_{2}\to S_{3}$, the saturated fiber product of $S_{1}$ and $S_{2}$ over $S_{3}$ is simply denoted by $S_{1}\times_{S_{3}} S_{2}$
\end{enumerate}

\section{Preliminaries on log schemes and log \texorpdfstring{$p$}--divisible groups}\label{sec 2}

Throughout this section, let $S$ denote an fs log scheme.

\subsection{Kummer log flat topologies} \noindent

Let $(\fs/S)$ denote the category of fs log schemes over $S$. Note the category of schemes over $\mathring{S}$ is regarded as a full subcategory of $(\fs/S)$ by equipping schemes over $\mathring{S}$ with the pullback log structure of $\cM_{S}$.

In this paper, we use Kummer log flat topology introduced by Kato; see \cite[Section 2]{kat21}. Recall that, for $U\in (\fs/S)$, a family of morphisms $\{f_{i}\colon U_{i}\to U\}$ is called a \emph{Kummer log flat cover} if the following conditions are satisfied.
    \begin{enumerate}
        \item Each $f_i$ is log flat, of Kummer type, and locally of finite presentation.
        \item The family is set-theoretically surjective, i.e. $\mathring{U}=\cup f_{i}(\mathring{U_{i}})$.
    \end{enumerate}

The \emph{Kummer log flat topology} on $(\fs/S)$ is the Grothendieck topology given by Kummer log flat covers on $(\fs/S)$. The resulting site is denoted by $(\fs/S)_{\kfl}$.

\begin{prop}
    For an fs log scheme $T$ over $S$, the representable presheaf by $T$ on $(\fs/S)$ is a sheaf.
\end{prop}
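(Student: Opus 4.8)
The plan is to verify the sheaf axiom directly for every Kummer log flat cover, using the structure theory of such covers to reduce to two tractable cases. Write $h_T=\Hom_{(\fs/S)}(-,T)$. Given a Kummer log flat cover $\{f_i\colon U_i\to U\}$ of an object $U\in(\fs/S)$, I would first replace it by the single morphism $V:=\coprod_i U_i\to U$; the disjoint union (with componentwise log structure) is again an fs log scheme, and the resulting morphism is log flat, of Kummer type, locally of finite presentation, and set-theoretically surjective, hence again a Kummer log flat cover. Since $h_T$ carries coproducts to products, the sheaf condition for $\{f_i\}$ is equivalent to the assertion that
\[
h_T(U)\longrightarrow h_T(V)\rightrightarrows h_T(V\times_U V)
\]
is an equalizer, i.e.\ that a morphism $V\to T$ over $S$ whose two pullbacks to $V\times_U V$ coincide descends uniquely to a morphism $U\to T$.

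Next I would reduce to standard covers. By Kato's structure theory of the Kummer log flat topology (\cite{kat21}), this topology is generated over the strict fppf topology by the \emph{standard Kummer covers}, namely the base changes along a chart $U\to\Spec\bZ[P]$ of a morphism $\Spec\bZ[P']\to\Spec\bZ[P]$ attached to a Kummer homomorphism $P\hookrightarrow P'$ of fs monoids. By the standard criterion for sheaves with respect to a topology obtained by adjoining a base-change--stable family of covers to a given subcanonical one, it therefore suffices to prove (a) that $h_T$ is a sheaf for the strict fppf topology, and (b) that $h_T$ satisfies descent along standard Kummer covers.

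For (a) the descent is classical. A morphism of fs log schemes $V\to T$ over $S$ is a morphism $f$ of underlying schemes together with a compatible morphism $f^{*}\cM_T\to\cM_V$ of log structures. The underlying morphism descends by Grothendieck's fppf descent for morphisms of schemes, and because the cover is strict the log structure $\cM_V$ is pulled back from $\cM_U$, so the monoid part of the datum descends as well; uniqueness is immediate from faithful flatness. Hence $h_T$ is a sheaf for strict fppf covers.

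The remaining case (b), that of a standard Kummer cover $V=U\times_{\Spec\bZ[P]}\Spec\bZ[P']\to U$, is the genuinely log-geometric one. Here I would compute the fibre product explicitly as $V\times_U V=V\times_{\Spec\bZ[P']}\Spec\bigl(\bZ[P']\otimes_{\bZ[P]}\bZ[P']\bigr)$, and use that $\bZ[P']\otimes_{\bZ[P]}\bZ[P']$ decomposes according to the finite abelian group $P'^{\mathrm{gp}}/P^{\mathrm{gp}}$, which furnishes a concrete Kummer descent datum. Combining this description with the faithful flatness of $\bZ[P]\to\bZ[P']$, one checks that a morphism $V\to T$ invariant under the two projections factors uniquely through $U$, descending simultaneously the underlying scheme morphism and the morphism of log structures. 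I expect this last step to be the main obstacle, since it is where the log structure genuinely changes along the cover and where the monoid combinatorics of $P\hookrightarrow P'$ must be reconciled with ordinary flat descent; the reduction and the strict case are formal by comparison.
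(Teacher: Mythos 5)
The paper gives no argument for this proposition: it simply cites \cite[Theorem 3.1]{kat21}. Your overall architecture --- reducing to a single cover, splitting the Kummer log flat topology into strict fppf covers plus standard Kummer covers via the structure theory of \cite[Proposition 2.7]{kat21}, and disposing of the strict case by classical fppf descent --- is indeed the skeleton of Kato's proof, so the reduction is sound in outline.

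There are, however, two genuine gaps in your step (b), which is where the entire content of the theorem lives. First, the fibre product $V\times_U V$ appearing in the sheaf axiom is taken in $(\fs/S)$, i.e.\ in the category of \emph{fs} log schemes, and for a non-strict Kummer morphism this is not the scheme-theoretic fibre product: one must pass to $\Spec\bZ[(P'\oplus_P P')^{\mathrm{int},\mathrm{sat}}]$, and it is only after integralization and saturation that the decomposition by $G:=P'^{\mathrm{gp}}/P^{\mathrm{gp}}$ appears, giving $V\times_U V\cong V\times\Spec\bZ[G]$. The ring $\bZ[P']\otimes_{\bZ[P]}\bZ[P']$ that you write down does not decompose this way: for $P=\bN\xrightarrow{\times n}P'=\bN$ it is $\bZ[x,y]/(x^n-y^n)$, whereas the fs fibre product is $\Spec\bZ[t][X]/(X^n-1)$ --- exactly the identification used in Proposition \ref{ext of subgrp over log reg dvr} of the paper. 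Second, and more seriously, the descent itself is only asserted. You cannot ``combine with ordinary flat descent,'' because the underlying scheme morphism of a standard Kummer cover is finite and surjective but in general \emph{not} flat (for instance $\times 2$ on the monoid ring of the $A_1$-cone has a fibre of length $6$ over the torus-fixed point while the generic rank is $4$); the failure of flatness of the underlying morphisms is precisely why the log flat topology is introduced. The argument actually needed is to interpret the equalizer condition as invariance under the $\mathrm{Diag}(G)$-action on $V$ given by the grading $\bZ[P']=\bigoplus_{\gamma\in G}\bZ[P'_\gamma]$, to identify $U$ with the quotient via $\bZ[P'_0]=\bZ[P]$, and then to show that an invariant morphism to an arbitrary fs log scheme $T$ factors through this quotient (using that $V\to U$ is integral, so targets can be checked affine-locally), with a separate, parallel treatment of the morphism of log structures. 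As written, your proposal stops exactly where the theorem begins, so it should be regarded as a correct reduction plus an unproved core claim rather than a proof.
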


\begin{proof}
    See \cite[Theorem 3.1]{kat21}.
\end{proof}

The following lemma is a useful property of Kummer morphisms.

\begin{lem} \label{kum}
    Let $f\colon T\to S$ be a Kummer morphism of fs log schemes. Suppose that $\mathring{T}$ is quasi-compact, and that we are given an fs chart $P\to \cM_{S}$. Then there exist an integer $n\geq 1$ such that, if we put $S'=S\times_{\bA_{P}} \bA_{P^{1/n}}$, the natural morphism $T\times_{S} S'\to S'$ is strict. 
\end{lem}

\begin{proof}
    See the proof of \cite[Proposition 2.7 (2)]{kat21}.
\end{proof}

\subsection{Log vector bundles} \noindent 

Let $\cO_{S_{\kfl}}$ be a sheaf of rings on $(\fs/S)_{\kfl}$ given by $T\mapsto \Gamma(\mathring{T}, \cO_{\mathring{T}})$. The fact that $\cO_{S_{\kfl}}$ is a sheaf on $(\fs/S)_{\kfl}$ is proved in \cite[\S 3.4]{kat21}. For a quasi-coherent sheaf $\cF$ on $\mathring{S}$, we consider the $\cO_{S_{\kfl}}$-module $\alpha \cF$ given by $T\mapsto \Gamma(\mathring{T},f^{*}\cF)$, where $f\colon \mathring{T}\to \mathring{S}$ is the structure morphism (cf.~\cite[Proposition 2.19]{niz08}). This gives a fully faithful functor $\alpha$ from the category of quasi-coherent $\cO_{\mathring{S}}$-modules to the category of $\cO_{S_{\kfl}}$-modules. The functor $\alpha$ is right exact and preserves exact sequences of locally free $\cO_{X}$-modules. 

\begin{dfn}[Log vector bundles] 
For a scheme $X$, we let $\mathcal{LF}(X)$ denote the category of vector bundles on $X$ (i.e.~locally free $\cO_{X}$-modules of finite rank).

A \emph{log vector bundle} on $S$ is a locally free sheaf of finite rank on the ringed site $((\fs/S)_{\kfl},\cO_{S_{\mathrm{kfl}}})$. In other words, an $\cO_{S_{\kfl}}$-module $\cE$ is called a log vector bundle if there exists a Kummer log flat cover $\{U_i\to S\}$ such that each restriction $\cE|_{U_i}$ is isomorphic to $\alpha \cF_{i}$ for a vector bundle $\cF_{i}$ on $\mathring{U_{i}}$. A log vector bundle $\cE$ is called \emph{classical} if $\cE$ is isomorphic to $\alpha \cF$ for a vector bundle $\cF$ on $\mathring{S}$. The category of log vector bundles on $S$ is denoted by $\mathcal{LLF}(S)$. The functor $\alpha$ induces a fully faithful functor
\[
\mathcal{LF}(\mathring{S})\to \mathcal{LLF}(S),
\]
which is also denoted by $\alpha$. Obviously, the functor $\alpha$ gives an equivalence from the category $\mathcal{LF}(\mathring{S})$ to the category of classical log vector bundles on $S$, and we identify these categories.
\end{dfn}

\begin{lem}\label{log qcoh cl}
    Let $S$ be a quasi-compact fs log scheme with an fs chart $P\to \cM_{S}$. Let $\cE$ be a log vector bundle on $S$. Then there exists an integer $n\geq 1$ such that, if we set $S'\coloneqq S\times_{\bA_{P}} \bA_{P^{1/n}}$, the pullback of $\cE$ by $S'\to S$ is classical.
\end{lem}

\begin{proof}
    Take a Kummer log flat cover $T\to S$ such that $\cE|_{T}$ is classical. We may assume that $\mathring{T}$ is quasi-compact. By Lemma \ref{kum}, there exist an integer $n\geq 1$ such that, if we set
    \[
    S'\coloneqq S\times_{\bA_{P}} \bA_{P^{1/n}},\ \  T'\coloneqq T\times_{S} S',
    \]
    the natural morphism $T'\to S'$ is a strict fppf cover. As $\cE|_{T}$ is classical, $\cE|_{T'}$ is also classical. By fppf descent, $\cE|_{S'}$ is also classical.
\end{proof}

\begin{prop}\label{lim log loc free}
    Let $\{ S_{i}\}_{i\in I}$ be a cofiltered system of fs log schemes satisfying the condition $(\ast)$ at the beginning of Appendix \ref{appendix}. We put $S\coloneqq \varprojlim_{i\in I} S_i$. 
    \begin{enumerate}
        \item Suppose that the cofiltered category $I$ has a final object $0$. Let $\cE_{0}$ and $\cF_{0}$ be objects of $\mathcal{LLF}(S_{0})$. We put $\cE_{i}\coloneqq \cE_{0}|_{S_{i}}$, $\cF_{i}\coloneqq \cF_{0}|_{S_{i}}$, $\cE\coloneqq \cE_{0}|_{S}$, and $\cF\coloneqq \cF_{0}|_{S}$. Then the natural map
        \[
        \displaystyle \varinjlim_{i\in I}\Hom(\cE_{i}, \cF_{i})\to \Hom(\cE,\cF)
        \]
        is an isomorphism.
        \item For any $\cE\in \mathcal{LLF}(S)$, there exist $i\in I$ and $\cE_{i}\in \mathcal{LLF}(S_{i})$ with an isomorphism $\cE_{i}|_{S}\cong \cE$.
    \end{enumerate}
\end{prop}

\begin{proof} 
    (1) By working Kummer log flat locally on $S_{0}$, this statement follows from the limit argument for usual vector bundles on schemes.

    (2) Take a qcqs Kummer log flat cover $T\to S$ such that $\cE|_{T}$ is classical. By Proposition \ref{lim log sch} (2) and Proposition \ref{lim log properties} (4), $T\to S$ comes from a Kummer log flat cover $T_{i}\to S_{i}$ and $\cE|_{T}$ comes from a vector bundle $\cE_{T_{i}}$ over $\mathring{T_{i}}$ for some $i\in I$. By replacing $i$ with a bigger object of $I$, we may assume that the descent datum of $\cE|_{T}$ over $T\times_{S} T$ comes from the descent datum of $\cE_{T_{i}}$ over $T_{i}\times_{S_{i}} T_{i}$, which defines an object $\cE_{i}$ of $\mathcal{LLF}(S_{i})$ with an isomorphism $\cE_{i}|_{S}\cong \cE$.
\end{proof}

\begin{prop}[cf.~{\cite[Theorem 6.2]{kat21}}]\label{pointwise kfl vect bdle}
    Let $\cE\in \mathcal{LLF}(S)$. Suppose that the pullback of $\cE$ to a point $s\in S$ is classical. Then there exists an open neighborhood $U\subset S$ of $s$ such that $\cE|_{U}$ is classical. In particular, if the pullback of $\cE$ to every point $s\in S$ is classical, then $\cE$ is classical.
\end{prop}

\begin{proof}
    By the limit argument (Proposition \ref{lim log loc free}), we may assume that $\mathring{S}$ is the spectrum of a noetherian strict local ring. In this case, the assertion follows from  \cite[Theorem 6.2]{kat21}.
\end{proof}

\subsection{Log finite group schemes} \noindent

In this subsection, we review the definition of log finite group schemes introduced by Kato in \cite{kat23} and prove some fundamental properties. For a scheme $X$, the category of finite locally free group schemes over $X$ is denoted by $(\mathrm{fin}/X)$.

\begin{dfn}[\cite{kat23}, Section 1] \noindent

(1) Let $(\fin/S)_{\rm{c}} $ be the category of sheaves of abelian groups on $(\fs/S)_{\kfl} $ represented by finite locally free group schemes over $\mathring{S}$ which are considered as strict log schemes over $S$. Obviously, the category $(\fin/S)_{\rm{c}}$ is naturally equivalent to the category $(\mathrm{fin}/\mathring{S})$, and we identify these categories.

(2) Let $(\fin/S)_{\rm{f}} $ be the category of sheaves $G$ of abelian groups on $(\fs/S)_{\kfl} $ such that there exists a Kummer log flat cover $\{U_{i}\to S \} $ with $G|_{U_{i}}\in (\fin/U_{i})_{\rm{c}} $. For $G\in (\fin/S)_{\rm{f}}$, the sheaf $G^{*}\coloneqq \cH\! \mathit{om}(G, \bG_{m})$ on $(\fs/S)_{\kfl} $ is called the \emph{Cartier dual} of $G$. When $G$ belongs to $(\fin/S)_{\rm{c}}$, the sheaf $G^{*}$ is representable by the Cartier dual of $G$ in the usual sense. By working Kummer log flat locally on $S$, we see that $G^{*}$ belongs to $(\fin/S)_{\rm{f}}$ for $G\in (\fin/S)_{\rm{f}}$.

(3) Let $(\fin/S)_{\rm{d}} $ be the full subcategory of $(\fin/S)_{\rm{f}} $ consisting of objects $G$ such that both of $G$ and $G^{*}$ are representable by finite Kummer log flat fs log schemes of finite presentation over $S$. Objects in $(\fin/S)_{\rm{d}} $ are called \emph{log finite group schemes} over $S$.

We have the inclusion relation of categories
    \[
    (\fin/S)_{\rm{c}}\subset (\fin/S)_{\rm{d}}\subset (\fin/S)_{\rm{f}}.
    \]
When an object $G\in (\fin/S)_{\rm{f}}$ belongs to $(\fin/S)_{\rm{c}}$, we say that $G$ is \emph{classical}.
\end{dfn}

\begin{dfn}
    For $G\in (\fin/S)_{\rm{f}}$, we say that $G$ is of \emph{order $n$} if there exists a Kummer log flat cover $\{U_{i}\to S\} $ such that, for each $i$, $G|_{U_{i}}$ is a finite locally free group scheme of order $n$ over $\mathring{U_{i}}$. 
\end{dfn}

\begin{ex}
    If the log structure of $S$ is trivial, the category $(\fin/S)_{\rm{f}} $ is equal to $(\fin/S)_{\rm{c}} $. This follows from the fact that, for any fs log scheme $T$ with a  Kummer morphism $T\to S$, the log structure of $T$ is also trivial.
\end{ex}

\begin{ex}\label{log av torsion}
    Let $n\geq 1$ be an integer. Every log abelian variety $A$ over $S$ (for its definition, see \cite[Definition 4.1]{kkn08}) is a sheaf on $(\fs/S)_{\kfl}$ by \cite[Theorem 2.1 (1)]{zha17}. The sheaf $A[n]:=\Ker(\times n\colon A\to A)$ is a log finite group scheme of order $n^{2\dim(A)} $ over $S$ by \cite[Proposition 18.1 (2)]{kkn15}, Proposition \ref{fiberwise} below, and \cite[Proposition 3.4 (3)]{zha17}.
\end{ex}

\begin{ex}\label{log fin grp ass to q}
    Let $S$ be an fs log scheme. Let $\bG_{m, \log}$ denote a sheaf on $(\fs/S)_{\kfl}$ given by $\bG_{m, \log}(T)=\Gamma(\mathring{T}, \cM_{T}^{\rm{gp}})$ for $T\in (\fs/S)$. The fact that $\bG_{m, \log}$ is a sheaf is proved in \cite[Theorem 3.2]{kat21}. For an integer $n\geq 1$, there is a Kummer sequence (\cite[Proposition 4.2]{kat21})
    \[
    0 \to \mu_{n} \to \bG_{m, \log} \stackrel{(-)^{n}}{\to} \bG_{m, \log} \to 0.
    \]
    From this sequence, we get a connecting homomorphism 
    \[
    \bG_{m, \log}(S)=\Gamma(\mathring{S}, \cM_{S}^{\rm{gp}})\to H^{1}((\fs/S)_{\kfl}, \mu_{n})=\Ext^{1}_{(\fs/S)_{\kfl}}(\bZ/n\bZ, \mu_{n})
    \]
    which associates to any $q\in \Gamma(S, \cM_{S}^{\rm{gp}})$ an extension of sheaves on $(\fs/S)_{\kfl}$
    \[
    0\to \mu_{n}\to G_{q}\to \bZ/n\bZ\to 0.
    \]
    The sheaf $G_{q}$ is a log finite group scheme over $S$ by \cite[Proposition 2.3]{kat23}.
\end{ex}

\begin{lem}\label{kfl descent for weak log fin}
    The category $(\mathrm{fin}/S)_{\mathrm{f}}$ satisfies Kummer log flat descent with respect to $S$.
\end{lem}

\begin{proof}
    For a site $\sC$, write $\mathrm{Shv}(\sC)$ for the category of abelian groups on $\sC$. Obviously, the category $\mathrm{Shv}((\mathrm{fs}/S)_{\mathrm{kfl}})$ satisfies Kummer log flat descent with respect to $S$. For $G\in \mathrm{Shv}((\mathrm{fs}/S)_{\mathrm{kfl}})$, the property that $G$ belongs to $(\mathrm{fin}/S)_{\mathrm{f}}$ is Kummer log flat locally on $S$. Therefore, the category $(\mathrm{fin}/S)_{\mathrm{f}}$ also satisfies Kummer log flat descent with respect to $S$.
\end{proof}

\begin{lem} \label{log fin grp cl}
Let $S$ be a quasi-compact fs log scheme with an fs chart $P\to \cM_{S}$. Let $G\in (\fin/S)_{\rm{f}}$. Then there exists an integer $n\geq 1$ such that, if we set $S'\coloneqq S\times_{\bA_{P}} \bA_{P^{1/n}}$, the pullback of $G$ by $S'\to S$ is classical.
\end{lem}

\begin{proof}
    This can be proved in the same way as Lemma \ref{log qcoh cl}.
\end{proof}

\begin{prop}\label{lim log fin grp sch}
Let $\{ S_{i}\}_{i\in I}$ be a cofiltered system of fs log schemes satisfying the condition $(\ast)$ at the beginning of Appendix \ref{appendix}. We put $S\coloneqq  \varprojlim_{i\in I} S_i$. 

(1) Suppose that the cofiltered category $I$ has a final object $0$. Let $G_{0}$ and $H_{0}$ be objects of $(\fin/S_{0})_{\rm{f}}$. We put $G_{i}:=G_{0}|_{S_{i}}$, $H_{i}:=H_{0}|_{S_{i}}$, $G:=G_{0}|_{S}$, and $H:=H_{0}|_{S}$. Then the natural map
        \[
        \displaystyle \varinjlim_{i\in I}\Hom(G_{i}, H_{i})\to \Hom(G, H)
        \]
        is an isomorphism.
        
(2) For any $G\in (\fin/S)_{\rm{f}}$, there exist $i\in I$ and $G_{i}\in (\fin/S_{i})_{\rm{f}}$ with an isomorphism $G_{i}|_{S}\cong G$.

(3) For any $G\in (\fin/S)_{\rm{d}}$, there exist $i\in I$ and $G_{i}\in (\fin/S_{i})_{\rm{d}}$ with an isomorphism $G_{i}|_{S}\cong G$.
\end{prop}

\begin{proof} 
    (1) By working Kummer log flat locally on $S_{0}$, this statement follows from the limit argument for usual finite locally free group schemes over schemes.

    (2) This follows from the same argument as Proposition \ref{lim log loc free} (2).
    
    (3) This follows from (1) and the limit argument for finite Kummer log flat fs log schemes of finite presentation (Proposition \ref{lim log sch} and Proposition \ref{lim log properties} (4)).
\end{proof}

\begin{rem}
    The theory of log finite group schemes is introduced and developed in \cite{kat23} under the assumption that the base log scheme is noetherian. Since an fs affine noetherian log scheme can be written as a limit of a cofiltered system of fs affine log schemes satisfying the condition ($\ast$) at the beginning of Appendix \ref{appendix} by Lemma \ref{lim log str}, Proposition \ref{lim log fin grp sch} enables us to generalize several results in \textit{loc. cit.} to the non-noetherian case.
\end{rem}

Let $G\in (\fin/S)_{\rm{f}}$. We let $\cA_{G}$ denote the sheaf of morphisms $\mathcal{M}or(G,\cO_{S_{\mathrm{kfl}}})$ on $(\mathrm{fs}/S)_{\mathrm{kfl}}$. The $\cO_{S_{\kfl}}$-algebra structure on $\cO_{S_{\kfl}}$ makes $\cA_{G}$ an $\cO_{S_{\kfl}}$-algebra, and the abelian group structure on $G$ defines a commutative Hopf $\cO_{S_{\mathrm{kfl}}}$-algebra structure on $\cA_{G}$. If $G\in (\fin/S)_{\rm{c}}$, $\cA_{G}$ coincides with the usual coordinate ring of $G$. Hence, by working Kummer log flat locally on $S$, we see that the underlying $\cO_{S_{\mathrm{kfl}}}$-module of $\cA_{G}$ belongs to $\mathcal{LLF}(S)$. Therefore, $\cA_{G}$ is a commutative Hopf object of the unitary symmetric monoidal category $\mathcal{LLF}(S)$.

Conversely, for a commutative Hopf object $\cA$ of $\mathcal{LLF}(S)$, we let $G_{\cA}$ denote the sheaf of homomorphisms of $\cO_{S_{\mathrm{kfl}}}$-algebras $\mathcal{H}om(\cA,\cO_{S_{\mathrm{kfl}}})$ on $(\mathrm{fs}/S)_{\mathrm{kfl}}$. The commutative Hopf structure on $\cA$ defines an abelian group structure $G_{\cA}$. By working  Kummer log flat locally on $S$, we see that $G_{\cA}$ is an object of $(\mathrm{fin}/S)_{\mathrm{f}}$. 

\begin{prop}[cf.~{\cite[Proposition 2.15]{kat23}}]\label{log coordinate ring} 

The assignments $G\mapsto \cA_{G}$ and $\cA\mapsto G_{\cA}$ give a contravariant equivalence between the category $(\fin/S)_{\rm{f}}$ and the category of commutative Hopf objects of the unitary symmetric monoidal category $\mathcal{LLF}(S)$. Moreover, an object $G\in (\fin/S)_{\rm{f}}$ is classical if and only if $\cA_{G}$ is classical.
\end{prop}

\begin{proof}
    The former assertion follows from the definition, and the latter assertion follows the former one. 
\end{proof}

\begin{prop}\label{sub of cl is cl}
    Let $H\subset G$ be objects of $(\fin/S)_{\rm{f}}$. Suppose that $G$ is classical. Then $H$ and $G/H$ are also classical.
\end{prop}

\begin{proof}
    By working Kummer log flat locally on $S$, we see that the inclusion morphism $H\hookrightarrow G$ induces a surjection $\cA_{G}\twoheadrightarrow \cA_{H}$. Since $\cA_{G}$ is classical, $\cA_{H}$ is also classical by \cite[Proposition 3.29]{niz08}. Hence, Proposition \ref{log coordinate ring} implies that $H$ is classical. Therefore, $G/H$ is also classical.
\end{proof}

\begin{construction}[Connected-\'{e}tale sequences, cf. {\cite[(2.6)]{kat23}}]\label{construction conn et seq} \noindent

Let $S$ be an fs log scheme such that $\mathring{S}=\mathrm{Spec}R$ for a henselian local ring $R$, and let $G\in (\fin/S)_{\rm{f}}$. We shall construct a unique exact sequence of objects of $(\fin/S)_{\rm{f}}$
\begin{equation}
    0\to G^{0}\to G\to G^{\et}\to 0
\end{equation}
such that, for every finite Kummer log flat cover $T\to S$ with $G|_{T}$ being classical, the pullback of the sequence (2.1) is the connected-\'{e}tale sequence for $G|_{T}$. This sequence is called the \emph{connected-\'{e}tale sequence} for $G$. The uniqueness follows from the uniqueness of connected-\'{e}tale sequences in non-log cases. 

First, we assume that $R$ is strict local. Take a chart $P\to \cM_{S}$. By Lemma \ref{log fin grp cl}, there exists an integer $n\geq 1$ such that the pullback $G|_{T}$ is classical, where we set $T\coloneqq S\times_{\bA_{P}} \bA_{P^{1/n}}$. Since the underlying schemes of $T$, $T\times_{S} T$, and $T\times_{S} T\times_{S} T$ are the spectrums of finite products of strict local rings, the connected-\'{e}tale sequence for $G|_{T}$ descends to the sequence (2.1) via Kummer log flat descent (Lemma \ref{kfl descent for weak log fin}) by the uniqueness of connected-\'{e}tale sequences in non-log cases.

In general, what we proved in the previous paragraph and the limit argument (Proposition \ref{lim log fin grp sch}) allow us to take a strict finite \'{e}tale cover $S'\to S$ such that $G|_{S'}$ admits a connected-\'{e}tale sequence. By the uniqueness of connected-\'{e}tale sequences, this descends to the desired sequence (2.1). 
\end{construction}

\begin{lem}\label{subquot and connet seq}

Let $S$ be an fs log scheme such that $\mathring{S}$ is the spectrum of a henselian local ring. Let $H\subset G$ be objects in $(\fin/S)_{\rm{f}}$. Then there are natural isomorphisms of objects in $(\fin/S)_{\rm{f}}$
\begin{align*}
    H^{0}\cong H\cap G^{0}, \  H^{\et}\cong \mathrm{Im}(H\to G^{\et}), \ (G/H)^{0}\cong G^{0}/H^{0}, \ (G/H)^{\et}\cong G^{\et}/H^{\et}.
\end{align*}
\end{lem}

\begin{proof}
    The analogous statements for non-log finite locally free group schemes are well-known, to which  one can reduce the original statement by Kummer log flat descent.
\end{proof}

\begin{prop}[cf.~{\cite[Proposition 2.7 (3)]{kat23}}]\label{conn et}
    Let $S$ be an fs log scheme such that $\mathring{S}=\Spec R$ for a henselian local ring $R$ with residue characteristic $p>0$. Let $G$ be an object of $(\fin/S)_{\rm{f}}$ killed by some power of $p$. Then the object $G$ belongs to $(\fin/S)_{\rm{d}}$ if and only if both of $G^{0}$ and $G^{\et}$ are classical. Moreover, in this case, $G^{0}$ (resp.~$G^{\et}$) is a connected finite locally free group scheme (resp.~a finite \'{e}tale group scheme) over $\mathring{S}$. 
\end{prop}

\begin{proof} This is proved in \cite[Proposition 2.7 (3)]{kat23} in the case where $R$ is noetherian. In general, one can reduce the problem to this case by the limit argument (Proposition \ref{lim log fin grp sch}).
\end{proof}

\begin{lem}\label{global conn et fin ver}
Let $G$ be a log finite group scheme over $S$. Suppose that $p$ is locally nilpotent on $\mathring{S}$ and that the order of $(G|_{s})^{\et}$ is constant for every $s\in S$. Then there exists a unique exact sequence
\[
0\to G^{0}\to G\to G^{\et}\to 0,
\]
where $G^{0}$ is a finite locally free radiciel group scheme over $\mathring{S}$ and $G^{\et}$ is a finite \'{e}tale group scheme over $\mathring{S}$.    
\end{lem}

\begin{proof}
Since the exact sequence as in the statement is unique if it exists, we may work strict \'{e}tale locally on $S$. Hence, by the limit argument (Proposition \ref{lim log fin grp sch}), we may assume that $\mathring{S}$ is the spectrum of a strict local ring. Consider the connected-\'{e}tale exact sequence 
\[
0\to G^{0}\to G\to G^{\et}\to 0,
\]
where $G^{0}$ is a finite locally free group scheme over $\mathring{S}$ and $G^{\et}$ is a finite \'{e}tale group scheme over $\mathring{S}$ by Proposition \ref{conn et}. For every point $t\in S$, we have the following diagram:
\[
\begin{tikzcd}
    0 \ar[r] & ( G|_{t})^{0} \ar[r] \ar[d,dashed] & G|_{t} \ar[r] \ar[d,equal] &  (G|_{t})^{\et} \ar[r] \ar[d,dashed] & 0 \\
    0 \ar[r] & (G^{0})|_{t} \ar[r] & G|_{t} \ar[r] & (G^{\et})|_{t} \ar[r] & 0.
\end{tikzcd}
\]
By the assumption, the right vertical homomorphism is a surjection between finite \'{e}tale group schemes of the same order, which is an isomorphism. Hence, the left vertical map is also an isomorphism. This proves that $G^{0}$ is radiciel over $\mathring{S}$. 
\end{proof}

\begin{prop}[cf.~{\cite[Proposition 2.16]{kat23}}]\label{fiberwise}
    Let $G\in (\fin/S)_{\rm{f}}$. Let $\star\in \{\mathrm{d,c}\}$ be a subscript. Suppose that the pullback of $G$ to a point $s\in S$ belongs to $(\fin/s)_{\star}$. Then there exists an open neighborhood $U\subset S$ of $s$ such that $G|_{U}$ belongs to $(\fin/U)_{\star}$. In particular, if the pullback of $G$ to every point $s\in S$ belongs to $(\fin/s)_{\star}$, then $G$ also belongs to $(\fin/S)_{\star}$.
\end{prop}

\begin{proof}
    The limit argument (Proposition \ref{lim log fin grp sch}) allows us to assume that $S$ is locally noetherian, in which case the assertion is proved in \cite[Proposition 2.16]{kat23}. 
\end{proof}

\begin{lem}\label{prime order log fin grp}
Let $G$ be a log finite group scheme over $S$ of order $p$. Suppose that $p$ is locally nilpotent over $\mathring{S}$. Then $G$ is classical.
\end{lem}

\begin{proof}
    Due to Proposition \ref{fiberwise}, we may assume that the $\mathring{S}$ is the spectrum of a field. Since the order of $G$ is equal to the product of the orders of $G^{0}$ and $G^{\et}$, the log finite group scheme $G$ is isomorphic to either $G^{0}$ or $G^{\et}$. Hence, the assertion follows from \cite[Proposition 2.7 (3)]{kat23}.
\end{proof}

\begin{lem}\label{subgrp uniqueness}
    Let $S$ be an fs log scheme such that $\mathring{S}=\mathrm{Spec} R$ for a reduced henselian local ring $R$ with residue characteristic $p>0$. Let $G$ be a log finite group scheme over $S$ killed by some power of $p$, and let $H_{1}, H_{2}$ be log finite subgroup schemes of $G$. Suppose that $H_{1}^{0}$ is contained in $H_{2}^{0}$, $H_{1}^{\et}$ is contained in $H_{2}^{\et}$, and $G^{0}$ is radiciel over $S$. Then $H_{1}$ is contained in $H_{2}$.
\end{lem}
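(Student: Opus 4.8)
The plan is to reduce the statement to the triviality of a group scheme that is simultaneously étale and radiciel. First I would form the quotient $G/H_2$ and set $H:=H_1\cap H_2=\Ker(H_1\hookrightarrow G\to G/H_2)$, so that $H_1/H$ is identified with the image of $H_1$ in $G/H_2$; in particular $H_1/H$ embeds into $G/H_2$ as a log finite subgroup scheme. Since $H_1\subseteq H_2$ is equivalent to the vanishing of the composite $H_1\to G/H_2$, it suffices to prove $H_1/H=0$.

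Next I would analyse $H_1/H$ through the connected-étale sequence over the henselian local base. On one hand, the hypothesis $H_1^{0}\subseteq H_2^{0}\subseteq H_2$ gives $H_1^{0}\subseteq H$, so $H_1/H$ is a quotient of $H_1/H_1^{0}=H_1^{\et}$; as quotients of étale objects are again étale, $H_1/H$ has trivial connected part. On the other hand, the composite $H_1/H\hookrightarrow G/H_2\to (G/H_2)^{\et}=G^{\et}/H_2^{\et}$ is induced by $H_1\to G^{\et}$, whose image $H_1^{\et}$ lies in $H_2^{\et}$ by hypothesis; hence this composite is zero, and $H_1/H$ factors through $(G/H_2)^{0}=G^{0}/H_2^{0}$. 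Because $G^{0}$ is radiciel over $S$, its kfl quotient $G^{0}/H_2^{0}$ is radiciel as well, and therefore so is its subgroup scheme $H_1/H$.

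Finally I would invoke the fact that a log finite group scheme over the connected base $S$ which is both étale and radiciel is trivial (its geometric fibres are reduced and consist of a single point, hence have rank $1$), concluding $H_1/H=0$ and thus $H_1\subseteq H_2$. I expect the main obstacle to be the foundational bookkeeping rather than this last, conceptual step: one must justify, within the Kummer log flat framework, that the quotients $G/H_2$ and $H_1/H$ again lie in $(\fin/S)_{\rm f}$ and fit into connected-étale sequences, that $H_2^{0}=H_2\cap G^{0}$, that the formations $(\cdot)^{0}$ and $(\cdot)^{\et}$ are exact (so that indeed $(G/H_2)^{0}=G^{0}/H_2^{0}$ and $(G/H_2)^{\et}=G^{\et}/H_2^{\et}$), and that radicality is inherited by kfl quotients and by subgroup schemes. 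These rest on Kato's structure theory recalled in Proposition \ref{conn et} and Proposition \ref{fiberwise}, together with the standard stability properties of universally injective morphisms.
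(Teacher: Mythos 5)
Your proposal is correct and follows essentially the same route as the paper: both arguments show that the composite $H_{1}\to G/H_{2}$ factors through a map from the \'{e}tale part $H_{1}^{\et}$ into the radiciel object $G^{0}/H_{2}^{0}$, and then conclude that such a map vanishes (the paper phrases this as $\Hom_{S}(H_{1}^{\et}, G^{0}/H_{2}^{0})=0$ using reducedness of $H_{1}^{\et}$, you phrase it as the image being simultaneously \'{e}tale and radiciel, hence trivial). The foundational points you flag at the end are exactly the ones the paper also relies on implicitly via Kato's connected--\'{e}tale theory.
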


\begin{proof}
By Lemma \ref{subquot and connet seq}, we have the following commutative diagram of log finite group schemes:
    \[
     \begin{tikzcd}
     0 \ar[r] & H_{1}^{0} \ar[r] \ar[d,"0"] & H_{1} \ar[r] \ar[d] & H_{1}^{\et} \ar[r] \ar[d,"0"] & 0
     \\
     0 \ar[r] & G^{0}/H_{2}^{0} \ar[r] & G/H_{2} \ar[r] & G^{\et}/H_{2}^{\et} \ar[r] & 0.
     \end{tikzcd}
     \]
     By the above diagram, the canonical morphism $H_{1}\to G/H_{2}$ factors as $H_{1}\to H_{1}^{\et}\to G^{0}/H_{2}^{0}\to G/H_{2}$. Since $H_{1}^{\et}$ is reduced and $G^{0}/H_{2}^{0}$ is radiciel over $\mathring{S}$, we get
     \[
     \Hom_{S}(H_{1}^{\et}, G^{0}/H_{2}^{0})=0. 
     \]
     Therefore, $H_{1}$ is contained in $H_{2}$.
\end{proof}

\begin{cor}\label{sub of d is d}
    Let $H\subset G$ be objects of $(\fin/S)_{\rm{f}}$. Suppose that $G$ belongs to $(\fin/S)_{\rm{d}}$. Then $H$ and $G/H$ also belong to $(\fin/S)_{\rm{d}}$.
\end{cor}

\begin{proof}
    Proposition \ref{fiberwise} allows us to assume that $\mathring{S}=\mathrm{Spec} k$ for a field $k$ of characteristic $p\geq 0$. By \cite[Proposition 2.1]{kat23}, it is enough to treat the case where $p>0$ and $G$ is killed by some power of $p$. 

    Note that $G^{0}$ and $G^{\et}$ are classical by Proposition \ref{conn et}. There are inclusion relations $H^{0}\subset G^{0}$ and $H^{\et}\subset G^{\et}$ by Lemma \ref{subquot and connet seq}. Hence, Proposition \ref{sub of cl is cl} implies that $H^{0}$ and $H^{\et}$ are also classical. By using Proposition \ref{conn et} again, we see that $H$ belongs to $(\fin/S)_{\rm{d}}$. By applying this result to the subobject $(G/H)^{*}\subset G^{*}$, we conclude that $G/H$ also belongs to $(\fin/S)_{\rm{d}}$.
\end{proof}

\subsection{Log \texorpdfstring{$p$}--divisible groups}

\begin{dfn}[\cite{kat23}, Section 4]
    Let $(\BT/S)_{\rm{f}}$ be the category of sheaves of abelian groups $G$ on $(\fs/S)_{\kfl}$ satisfying the following conditions.
    \begin{enumerate}
        \item The equation $\displaystyle G=\bigcup_{n\geq 1} G[p^n]$ holds, where $G[p^n]:=\Ker(\times p^n\colon G\to G)$.
        \item The morphism $\times p\colon G\to G$ is surjective.
        \item For each $n\geq 1$, the sheaf $G[p^n]$ belongs to $(\fin/S)_{\rm{f}} $.
    \end{enumerate}
    Furthermore, for a subscript $\star\in \{\mathrm{d,c}\}$, we define a category $(\BT/S)_{\star}$ as the full subcategory of $(\BT/S)_{\rm{f}}$ consisting of objects $G$ such that $G[p^{n}]$ belongs to $(\fin/S)_{\star}$ for each $n\geq 1$. We say that $G\in (\mathrm{BT}/S)_{\mathrm{f}}$ is \emph{classical} is $G$ belongs to $(\mathrm{BT}/S)_{\mathrm{c}}$. The category $(\BT/S)_{\rm{c}}$ is naturally equivalent to the category of $p$-divisible groups over $\mathring{S}$. Objects in $(\BT/S)_{\rm{d}}$ are called \emph{log $p$-divisible groups} over $S$.
\end{dfn}

\begin{dfn}
    For $G\in (\BT/S)_{\rm{f}}$, we say that $G$ is \emph{of height $h$} if $G[p^n]$ is of order $p^{nh}$ for each $n\geq 1$ (equivalently, $G[p]$ is of order $p^h$).
\end{dfn}

\begin{ex}
    Let $n\geq 1$. For any log abelian variety $A$ over $S$ with $\mathrm{dim}(A)=d$, the object $\displaystyle A[p^{\infty}]\coloneqq \bigcup_{n\geq 1} A[p^n]$ is a log $p$-divisible group of height $2d$ over $S$ because $A[p^n]$ is a log finite group scheme of order $p^{2nd}$ by Example \ref{log av torsion} and $\times p\colon A\to A$ is surjective by \cite[Subsection 18.6]{kkn15}.
\end{ex}

\begin{lem}
    Let $S$ be an fs log scheme such that $\mathring{S}$ is the spectrum of a henselian local ring. Let $G\in (\BT/S)_{\rm{f}}$. Then
    \[
    G^{0}\coloneqq \bigcup_{n\geq 1} G[p^{n}]^{0} \ \  \text{and} \ \  G^{\et}\coloneqq \bigcup_{n\geq 1} G[p^{n}]^{\et}
    \]
    are objects of $(\BT/S)_{\rm{f}}$, and there exists an exact sequence
    \[
    0\to G^{0}\to G\to G^{\et}\to 0.
    \]
\end{lem}

\begin{proof}
    Let $\star\in \{0,\et\}$ be a subscript and $n,m\geq 1$ be integers. By Lemma \ref{subquot and connet seq}, the homomorphism $\times p^{n}\colon G[p^{n+m}]^{\star}\to G[p^{n+m}]^{\star}$ factors as
    \[
    G[p^{n+m}]^{\star}\to G[p^{m}]^{\star}\hookrightarrow G[p^{n+m}]^{\star}.
    \]
    We also simply write $\times p^{n}$ for the homomorphism $G[p^{n+m}]^{\star}\to G[p^{m}]^{\star}$ defined above. Then, by applying Lemma \ref{subquot and connet seq} again, we see that the sequence
    \[
    0\to G[p^{n}]^{\star}\hookrightarrow G[p^{n+m}]^{\star}\stackrel{\times p^{n}}{\to} G[p^{m}]^{\star}\to 0
    \]
    is exact, which implies that $G^{\star}$ is an object of $(\BT/S)_{\rm{f}}$. The remaining assertion is clear.
\end{proof}

\begin{lem}\label{global conn et bt ver}
Let $G$ be a log $p$-divisible group over $S$. Suppose that $p$ is locally nilpotent on $S$ and that the height of $(G|_{s})^{\et}$ is constant for every $s\in S$. Then there exists a unique exact sequence
\[
0\to G^{0}\to G\to G^{\et}\to 0,
\]
where $G^{0}$ is a formal Lie group over $\mathring{S}$ and $G^{\et}$ is an \'{e}tale $p$-divisible group over $\mathring{S}$.    
\end{lem}

\begin{proof}
    This immediately follows from Lemma \ref{global conn et fin ver}.
\end{proof}

\begin{dfn}
     Let $G_{1}$ and $G_{2}$ be log $p$-divisible groups over $S$. A homomorphism $f\colon G_{1}\to G_{2} $ is called an \emph{isogeny} if $f$ is surjective and $\Ker(f)$ belongs to $(\mathrm{fin}/S)_{\mathrm{f}}$.
     
     Let $\mathrm{QIsog}(G_{1},G_{2})$ be the subset of $\mathrm{Hom}(G_{1},G_{2})[1/p]$ consisting of elements which can be written as $g/p^{n}$ for an integer $n\geq 1$ and an isogeny $g\colon G_{1}\to G_{2}$. A \emph{quasi-isogeny} from $G_{1}$ to $G_{2}$ is an element of the global section of the sheaf on the Zariski site of $S$ associated with the presheaf given by $U\mapsto \mathrm{QIsog}(G_{1}|_{U},G_{2}|_{U})$ for an open subset $U\subset S$. Note that, when $S$ is quasi-compact, the set of quasi-isogenies from $G_{1}$ to $G_{2}$ is naturally in bijection with the set $\mathrm{QIsog}(G_{1},G_{2})$.
\end{dfn}

\begin{lem}
    Let $G$ be a log $p$-divisible group and $H$ be a log finite subgroup scheme of $G$. Then the sheaf $G/H$ is a log $p$-divisible group, and the canonical map $G\to G/H$ is an isogeny. Moreover, this gives a one-to-one correspondence between log finite subgroup schemes of $G$ and isogenies from $G$.
\end{lem}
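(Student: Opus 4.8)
The plan is to split the statement into three claims: that $G/H$ is a log $p$-divisible group, that $G\to G/H$ is an isogeny, and that the correspondence $H\mapsto (G\to G/H)$ is bijective. For the quotient sheaf $G/H$ (the cokernel of $H\hookrightarrow G$ in the category of sheaves on $(\fs/S)_{\kfl}$), I would first verify the three defining conditions of $(\BT/S)_{\fin}$ and then upgrade membership from $\fin$ to $\mathrm{d}$. Since $H$ is killed by $p^{d}$ for some $d$ (Zariski locally), I would show $(G/H)[p^{n}]$ is exhausting and that $\times p$ is surjective on $G/H$, which follow formally from the snake lemma applied to $\times p^{n}$ on the exact sequence $0\to H\to G\to G/H\to 0$, using that $\times p$ is surjective on $G$. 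The key point is that each $(G/H)[p^{n}]$ lies in $(\fin/S)_{\fin}$: I would exhibit it as a sub/quotient built from $G[p^{n+d}]$ and $H$, both of which are log finite group schemes, and then invoke the Corollary following Proposition \ref{sub of cl is cl} (subquotients of objects of $(\fin/S)_{\mathrm{d}}$ are again in $(\fin/S)_{\mathrm{d}}$) together with the fiberwise criterion Proposition \ref{fiberwise} to pass from the henselian-local situation to general $S$.

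Next I would treat the isogeny claim. Surjectivity of $G\to G/H$ is immediate since $G/H$ is defined as the quotient sheaf (the map is an epimorphism of $\kfl$-sheaves). The kernel is exactly $H$, which is a log finite group scheme annihilated by $p^{d}$ Zariski locally, so $\Ker(G\to G/H)=H\subset G[p^{d}]$; this is precisely the definition of an isogeny. Here I would be careful that the notion of isogeny in the Definition requires $\Ker(f)\subset G[p^{d}]$ locally, and since $H$ is a log finite subgroup scheme it is killed by a power of $p$ Zariski-locally, giving the required $d$.

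For the bijectivity of the correspondence, I would produce an inverse: to an isogeny $f\colon G\to G'$ assign $\Ker(f)$, which is a log finite subgroup scheme of $G$ by the isogeny condition $\Ker(f)\subset G[p^{d}]$ (so it is killed by $p^{d}$, lies in $(\fin/S)_{\fin}$ as a kernel, and is in $(\fin/S)_{\mathrm{d}}$ by the Corollary to Proposition \ref{sub of cl is cl}). The two assignments are mutually inverse: starting from $H$ we recover $H=\Ker(G\to G/H)$ as noted above, and starting from an isogeny $f$ the induced map $G/\Ker(f)\to G'$ is an isomorphism because $f$ is surjective with kernel $\Ker(f)$, which is the first isomorphism theorem for $\kfl$-sheaves of abelian groups.

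I expect the main obstacle to be verifying that each $(G/H)[p^{n}]$ genuinely belongs to $(\fin/S)_{\mathrm{d}}$ rather than merely to $(\fin/S)_{\fin}$, i.e. that the log-finiteness is preserved under forming the quotient. The snake-lemma bookkeeping to identify $(G/H)[p^{n}]$ with an explicit subquotient of $G[p^{n+d}]$ requires care, and the descent from the henselian-local case (where the connected--\'etale sequence and Proposition \ref{conn et}(3) apply) to arbitrary $S$ relies essentially on the fiberwise criterion of Proposition \ref{fiberwise}. Once that membership is secured, the isogeny and bijectivity statements are largely formal consequences of working in the abelian category of $\kfl$-sheaves.
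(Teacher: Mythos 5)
The paper states this lemma without proof, so there is no argument of the author's to compare yours against; judged on its own terms, your proposal is correct in outline: quotient sheaf on the Kummer log flat site, snake lemma for $\times p^{n}$, reduction of the $(\fin/S)_{\rm d}$-membership to the henselian-local case via Proposition \ref{fiberwise}, and the kernel construction as inverse. The surjectivity of $\times p$ on $G/H$, the identification $\Ker(G\to G/H)=H\subset G[p^{d}]$, and the first isomorphism theorem for abelian sheaves are indeed formal, as you say.

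The one step that needs more than you give it is the membership $(G/H)[p^{n}]\in(\fin/S)_{\rm f}$, because every result you plan to cite (Proposition \ref{sub of cl is cl}, its Corollary, Proposition \ref{conn et}, Proposition \ref{fiberwise}) \emph{presupposes} that the sheaf in question already lies in $(\fin/S)_{\rm f}$. An arbitrary subsheaf of an object of $(\fin/S)_{\rm f}$, such as $\Ker(\times p^{n})$ on $G[p^{n+d}]/H$, is not obviously of this form, so ``exhibit it as a sub/quotient of $G[p^{n+d}]$ and invoke the Corollary'' is circular at exactly this point. The repair is to use the exact sequence the snake lemma actually produces: for $n\geq d$ one has $0\to G[p^{n}]/H\to (G/H)[p^{n}]\to H\to 0$ (using $G/p^{n}G=0$), so $(G/H)[p^{n}]$ is a Kummer log flat extension of objects of $(\fin/S)_{\rm f}$, and the argument behind Proposition \ref{conn et}(4) together with Proposition \ref{fiberwise} places such an extension in $(\fin/S)_{\rm d}$; for $n<d$ one then uses surjectivity of $\times p$ on $G/H$ to write $(G/H)[p^{n}]=\im\bigl(\times p^{m-n}\colon (G/H)[p^{m}]\to(G/H)[p^{m}]\bigr)\cong (G/H)[p^{m}]/(G/H)[p^{m-n}]$ with $m\geq n+d$, a quotient of objects already handled. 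The same caveat applies to $\Ker(f)$ in your inverse construction: you should first observe that Kummer log flat locally $f$ restricts to an isogeny of classical $p$-divisible groups, whose kernel is finite locally free, before the Corollary to Proposition \ref{sub of cl is cl} can be applied. With these points supplied, the rest of your argument goes through.
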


\begin{proof}
    By working Zariski locally on $S$, we may assume that $H$ is killed by $p^{d}$ for an integer $d\geq 1$. For an integer $n\geq d$, applying the snake lemma to the diagram
    \[
    \begin{tikzcd}
        0 \ar[r] & H \ar[r] \ar[d,"0"] & G \ar[r] \ar[d,"\times p^{n}"] & G/H \ar[r] \ar[d,"\times p^{n}"] & 0 \\
        0 \ar[r] & H \ar[r] & G \ar[r] & G/H \ar[r] & 0 
    \end{tikzcd}
    \]
    gives an exact sequence
    \[
    0\to G[p^{n}]/H\to (G/H)[p^{n}]\to H\to 0.
    \]
    Hence, it follows from \cite[Proposition 2.3]{kat23} that $(G/H)[p^{n}]$ is a log finite group scheme, and so $G/H$ is a log $p$-divisible group.
    
    Conversely, for an isogeny $f\colon G_{1}\to G_{2}$, the object $\mathrm{Ker}(f)\in (\mathrm{fin}/S)_{\mathrm{f}}$ is a log finite group scheme by Corollary \ref{sub of d is d}. This proves the assertion.
\end{proof}

\begin{prop}\label{weaker def of isog}
Let $f\colon G_{1}\to G_{2}$ be a surjection of log $p$-divisible groups over a locally noetherian fs log scheme $S$. Suppose that, Zariski locally, $\mathrm{Ker}(f)$ is killed by $p^{d}$ for an integer $d\geq 1$. Then $f$ is an isogeny.
\end{prop}

\begin{proof}
    It is enough to prove that $\mathrm{Ker}(f)$ is a log finite group scheme over $S$. By working Zariski locally on $S$, we may assume that $\mathrm{Ker}(f)\subset G_{1}[p^{d}]$ for an integer $d\geq 1$. The sheaf $\mathrm{Ker}(f)$ is representable by an fs log scheme of finite presentation over $S$ as it coincides with the kernel of the homomorphism of log finite group schemes $G_{1}[p^{d}]\to G_{2}[p^{d}]$ induced from $f$. Hence, by the limit argument (Proposition \ref{lim log fin grp sch} and Proposition \ref{lim log sch}), we may assume that $\mathring{S}$ is the spectrum of a strict local ring.

    Since $\mathrm{Ker}(f)$ is killed by $p^{d}$ and $f$ is surjective, there exists a unique homomorphism $g\colon G_{2}\to G_{1}$ such that both of $fg$ and $gf$ are equal to the multiplication by $p^{d}$. The homomorphism $f$ uniquely induces homomorphisms $f^{0}\colon G_{1}^{0}\to G_{2}^{0}$ and $f^{\et}\colon G_{1}^{\et}\to G_{2}^{\et}$ fitting into the following diagram:
    \[
    \begin{tikzcd}
        0 \ar[r] & G_{1}^{0} \ar[r] \ar[d,"f^{0}"] & G_{1} \ar[r] \ar[d,"f"] & G_{1}^{\et} \ar[r] \ar[d,"f^{\et}"] & 0 \\
        0 \ar[r] & G_{2}^{0} \ar[r] & G_{2} \ar[r] & G_{2}^{\et} \ar[r] & 0.
    \end{tikzcd}
    \]
    Similarly, the homomorphism $g$ uniquely induces homomorphisms $g^{0}\colon G_{2}^{0}\to G_{1}^{0}$ and $g^{\et}\colon G_{2}^{\et}\to G_{1}^{\et}$. By the uniqueness of the induced morphism on the connected part and the \'{e}tale part, all of the homomorphisms $f^{0}g^{0}$, $f^{\et}g^{\et}$, $g^{0}f^{0}$, and $g^{\et}f^{\et}$ are the multiplication by $p^{d}$. It follows from \cite[Proposition 3.3.8]{cco14} that all of the homomorphisms $f^{0}$, $f^{\et}$, $g^{0}$, and $g^{\et}$ are isogenies. By applying the snake lemma to the above diagram, we get an exact sequence
    \[
    0\to \mathrm{Ker}(f^{0})\to \mathrm{Ker}(f)\to \mathrm{Ker}(f^{\et})\to 0.
    \]
    Since $\mathrm{Ker}(f^{0})$ and $\mathrm{Ker}(f^{\et})$ are finite locally free group schemes over $\mathring{S}$, the sheaf $\mathrm{Ker}(f)$ is a log finite group scheme over $S$ by \cite[Proposition 2.3]{kat23}.
\end{proof}

\section{Completely slope divisible log \texorpdfstring{$p$}--divisible groups}\label{sec 3}

Throughout this section, let $S$ be an fs log scheme over $\bF_{p}$.

\begin{dfn}
    Let $G$ be a sheaf of abelian groups on $(\mathrm{fs}/S)_{\mathrm{kfl}}$ (the examples of primary interest are objects of $(\mathrm{fin}/S)_{\mathrm{f}}$ or $(\mathrm{BT}/S)_{\mathrm{f}}$). Let $G^{(p^{n})}$ denote the pullback of $G$ by the $n$-th power Frobenius morphism on $S$.
    
    For an fs log scheme $T$ over $S$, we write $T^{(F)}$ for the fs log scheme $T$ equipped with the structure map $T\to S\stackrel{F}{\to} S$, where $F$ is the Frobenius morphism on $S$. The Frobenius morphism on $T$ is a map $T^{(F)}\to T$ over $S$, and so we have an induced homomorphism $G(T)\to G(T^{(F)})\cong G^{(p)}(T)$. This defines a homomorphism $F\colon G\to G^{(p)}$, called the \emph{Frobenius homomorphism}. When $G$ belongs to $(\mathrm{fin}/S)_{\mathrm{c}}$ or $(\mathrm{BT}/S)_{\mathrm{c}}$, the homomorphism $F$ is nothing but the usual Frobenius homomorphism. For an integer $n\geq 1$, the composite of homomorphisms
    \[
    G\stackrel{F}{\to} G^{(p)}\stackrel{F^{(p)}}{\to} G^{(p^{2})}\stackrel{F^{(p^{2})}}{\to} \dots \stackrel{F^{(p^{n-1})}}{\to} G^{(p^{n})}
    \]
    are simply denoted by $F^{n}$, where $F^{(p^{k})}$ denotes the base change of $F$ by the $k$-th power Frobenius morphism on $S$ for an integer $k\geq 1$.
\end{dfn}

\begin{lem}\label{frob nilp imlply classical}
    Let $G$ be a log finite group scheme over $S$. Suppose that there is an integer $n\geq 1$ such that the homomorphism $F^{n}\colon G\to G^{(p^{n})}$ is an isomorphism (resp.~a zero map). Then $G$ is classical. In particular, $G$ is a finite \'{e}tale group scheme (resp.~a finite radiciel locally free group scheme) over $\mathring{S}$. 
\end{lem}

\begin{proof}
    We shall prove the assertion just in the case where $F^{n}$ is an isomorphism because the same argument also works in the other case. By Proposition \ref{fiberwise}, we may assume that $\mathring{S}=\mathrm{Spec} k$ for a field $k$. In this case, we have the following diagram:
    \[
    \begin{tikzcd}
        0 \ar[r] & G^{0} \ar[r] \ar[d,"F^{n}"] & G \ar[r] \ar[d,"F^{n}","\sim"' sloped] & G^{\et} \ar[r] \ar[d,"F^{n}","\sim"' sloped] & 0 \\
        0 \ar[r] & (G^{0})^{(p^{n})} \ar[r] & G^{(p^{n})} \ar[r] & (G^{\et})^{(p^{n})} \ar[r]  & 0. 
    \end{tikzcd}
    \]
    Since $G^{\et}$ is a finite \'{e}tale group scheme over $\mathring{S}$, the right vertical map in this diagram is an isomorphism. Hence, the left vertical map is also an isomorphism. On the other hand,  the $m$-th power of the Frobenius homomorphism $G^{0}\to (G^{0})^{(p^{m})}$ is a zero map for some integer $m\geq 1$ because $G^{0}$ is a connected finite group scheme over the field $k$. Therefore, $G^{0}$ is trivial, and so $G\cong G^{\et}$. In particular, $G$ is classical. 
\end{proof}

\begin{dfn} \label{log csd}
  A log $p$-divisible group $G$ over $S$ is \emph{completely slope divisible} if $G$ admits a filtration of log $p$-divisible subgroups
  \[
  0=G_0\subset G_1 \subset \dots \subset G_m=G,
  \]
  and if there are integers
  \[
  s\geq r_1> r_2 >\dots >r_{m}\geq 0
  \]
  such that the following conditions are satisfied :
  \begin{enumerate}
      \item The quasi-isogeny $p^{-r_{i}}F^{s}$ from $G_{i}$ to $G_{i}^{(p^{s})}$ is an isogeny for every $1\leq i\leq m$.
      \item The morphism $G_{i}/G_{i-1}\to (G_{i}/G_{i-1})^{(p^{s})}$ induced from $p^{-r_{i}}F^{s}\colon G_{i}\to G_{i}^{(p^{s})}$ is an isomorphism for every $1\leq i\leq m$.
  \end{enumerate}
\end{dfn}

\begin{prop}\label{dec of csd}
    Let $G$ be a completely slope divisible log $p$-divisible group over $S$. Then there exists the unique exact sequence
    \[
    0\to G^{0}\to G\to G^{\et}\to 0,
    \]
    where $G^{0}$ is a completely slope divisible formal Lie group over $\mathring{S}$ and $G^{\et}$ is an \'{e}tale $p$-divisible group over $\mathring{S}$.
\end{prop}

\begin{proof}
We use the notation in Definition \ref{log csd}. We may assume that $r_{m}=0$. Let $n\geq 1$ be an integer. By the definition, the Frobenius homomorphism $F\colon (G/G_{m-1})[p^{n}]\to (G/G_{m-1})[p^{n}]^{(p)}$ is an isomorphism, and the homomorphism $F^{sn}\colon G_{m-1}[p^{n}]\to G_{m-1}[p^{n}]^{(p^{sn})}$ is a zero map. Hence, by Lemma \ref{frob nilp imlply classical}, $G/G_{m-1}$ is an \'{e}tale $p$-divisible group over $\mathring{S}$, and $G_{m-1}$ is a formal Lie group over $\mathring{S}$. Therefore, the exact sequence
\[
0\to G_{m-1}\to G\to G/G_{m-1}\to 0
\]
is the desired one.
\end{proof}

We generalize the notion of Newton polygons to log $p$-divisible groups.

\begin{dfn}
    Let $S$ be an fs log scheme such that $\mathring{S}$ is the spectrum of a field of characteristic $p>0$. Let $G$ be a log $p$-divisible group over $S$. The \emph{Newton polygon} of $G$ is the Newton polygon of the classical $p$-divisible group $G^{0}\oplus G^{\et}$.
\end{dfn}

The notion of Newton polygons for log $p$-divisible groups enables us to formulate a log version of \cite[Proposition 2.3]{oz02} as follows.

\begin{cor}\label{generically csd implies csd}
    Let $G$ be a log $p$-divisible group over $S$ with constant Newton polygon. Suppose that $\mathring{S}$ is integral, and that the pullback $G|_{\eta}$ is completely slope divisible, where $\eta$ is the generic point. Then $G$ is completely slope divisible. 
\end{cor}

\begin{proof}
    We take $G^{0}$ and $G^{\et}$ as in Lemma \ref{global conn et bt ver}. The formal Lie group $G^{0}$ has a constant Newton polygon, and its generic fiber $G^{0}|_{\eta}$ is completely slope divisible. Hence, \cite[Proposition 2.3]{oz02} implies that $G^{0}$ is completely slope divisible. Therefore, $G$ is also completely slope divisible.
\end{proof}

The following corollary is a log version of \cite[Theorem 2.1]{oz02}.

\begin{cor}\label{first main thm}
    Let $G$ be a log $p$-divisible group over $S$ with constant Newton polygon. Suppose that $\mathring{S}$ is locally noetherian normal. Then there exist a completely slope divisible log $p$-divisible group $H$ over $S$ and an isogeny $G\to H$. 
\end{cor}

\begin{proof}
    We take $G^{0}$ and $G^{\et}$ as in Lemma \ref{global conn et bt ver}. Since $G^{0}$ has a constant Newton polygon, there exist a completely slope divisible classical $p$-divisible group $H'$ and an isogeny $\phi\colon G^{0}\to H'$ by \cite[Theorem 2.1]{oz02}. Then there is an exact sequence
    \[
    0\to H'\to G/\Ker(\phi)\to G^{\et}\to 0,
    \]
    and so $G/\Ker(\phi)$ is a completely slope divisible log $p$-divisible group. Hence, the canonical surjection $G\twoheadrightarrow G/\Ker(\phi)$ is the desired isogeny.
\end{proof}

The following lemma is a log version of \cite[Lemma 2.5]{oz02}, which we use in the proof of the main theorem.

\begin{lem}\label{fin csd quot}
    Suppose that $\mathring{S}$ is the spectrum of an algebraically closed field $k$. Let $G$ be a log $p$-divisible group over $S$ and $d\geq 1$ be an integer. Then there are only finitely many isomorphism classes of isogenies $G\to H$ of degree $p^d$ in which $H$ is a completely slope divisible log $p$-divisible group.
\end{lem}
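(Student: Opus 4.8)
The plan is to reduce the statement to the classical finiteness theorem of Oort--Zink (\cite{oz02}) applied to the connected part of $G$, using the rigidity of log finite subgroup schemes over a reduced henselian base provided by Lemma \ref{subgrp uniqueness}. First I would record that isomorphism classes of isogenies $G\to H$ of degree $p^d$ correspond bijectively to log finite subgroup schemes $K\subset G[p^d]$ of rank $p^d$, via $K=\Ker(G\to H)$; two isogenies are isomorphic precisely when they have the same kernel. So it suffices to bound the number of such $K$ for which $G/K$ is completely slope divisible.

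Since the underlying scheme of $S$ is $\Spec k$, the connected--\'{e}tale sequence
\[
0\to G^{0}\to G\to G^{\et}\to 0
\]
has $G^{0}$ and $G^{\et}$ classical by Proposition \ref{conn et}(3), with $G^{0}$ connected (hence radiciel over $S$) and $G^{\et}$ a constant \'{e}tale $p$-divisible group. For any log finite subgroup $K\subset G[p^d]$, set $K^{0}:=K\cap G^{0}\subset G^{0}[p^d]$ and let $K^{\et}\subset G^{\et}[p^d]$ be its image in $G^{\et}$; both are classical by Proposition \ref{sub of cl is cl}. The key point is that the assignment $K\mapsto (K^{0},K^{\et})$ is injective: if $K,K'$ have the same connected and \'{e}tale parts, then Lemma \ref{subgrp uniqueness} (applicable since $S$ is reduced, $k$ is henselian local, and $(G[p^d])^{0}=G^{0}[p^d]$ is radiciel) gives $K\subset K'$ and $K'\subset K$, whence $K=K'$. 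Thus it is enough to bound the number of pairs $(K^{0},K^{\et})$ that can arise.

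Now suppose $H=G/K$ is completely slope divisible, with filtration $0=G_0\subset\cdots\subset G_m=H$ and slopes $r_1/s>\cdots>r_m/s$ as in Definition \ref{log csd}. By functoriality of the connected--\'{e}tale sequence, the isogeny $\phi\colon G\to H$ restricts to isogenies $G^{0}\to H^{0}$ and $G^{\et}\to H^{\et}$ with kernels $K^{0}$ and $K^{\et}$, so $H^{0}\cong G^{0}/K^{0}$ and $H^{\et}\cong G^{\et}/K^{\et}$. Since the slopes are strictly decreasing, at most one graded piece has slope $0$; the positive-slope steps have no \'{e}tale quotient, so they assemble into the connected part, giving $H^{0}=G_{m-1}$ (or $H^{0}=H$ if all slopes are positive), which inherits a completely slope divisible structure from the filtration, while $H^{\et}$ is the remaining slope-$0$ piece. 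In particular $H^{0}$ is a classical completely slope divisible $p$-divisible group. Applying the classical Oort--Zink finiteness theorem (\cite{oz02}) to the classical $p$-divisible group $G^{0}$ over the algebraically closed field $k$ shows there are only finitely many $K^{0}\subset G^{0}[p^d]$ with $G^{0}/K^{0}$ completely slope divisible; and since $G^{\et}[p^d]$ is a finite group, there are finitely many possibilities for $K^{\et}$. Hence there are finitely many pairs $(K^{0},K^{\et})$, and by injectivity finitely many $K$.

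I expect the main obstacle to be the reduction to the classical case rather than any new finiteness input: concretely, verifying that a log finite subgroup is pinned down by its classical connected and \'{e}tale parts (the content of Lemma \ref{subgrp uniqueness}, which is exactly where the log structure is absorbed), and checking that complete slope divisibility of $H$ forces $H^{0}=G^{0}/K^{0}$ to be classical and completely slope divisible so that \cite{oz02} genuinely applies. Once these structural facts are in place, the finiteness is inherited from the classical theorem together with the finiteness of subgroups of the finite group scheme $G^{\et}[p^d]$.
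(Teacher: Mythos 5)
Your proposal is correct and follows essentially the same route as the paper's proof: pass to the kernel $K$, reduce to the pair $(K^{0},K^{\et})$ via the connected--\'{e}tale sequence, obtain finiteness of the possibilities for $K^{0}$ and $K^{\et}$ from the classical Oort--Zink finiteness result (together with the finiteness of subgroup schemes of $G^{\et}[p^d]$), and invoke Lemma \ref{subgrp uniqueness} to see that $K$ is determined by $(K^{0},K^{\et})$. The extra details you supply (that complete slope divisibility of $G/K$ forces $(G/K)^{0}$ to be completely slope divisible, and that the radiciel hypothesis of Lemma \ref{subgrp uniqueness} holds) are exactly the points the paper leaves implicit.
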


\begin{proof}
    Let $K$ be a log finite subgroup scheme of $G$ of order $p^{d}$. Since $G/K$ is completely slope divisible if and only if $(G/K)^{0}=G^{0}/K^{0} $ and $(G/K)^{\text{\'{e}t}}=G^{\text{\'{e}t}}/K^{\text{\'{e}t}} $, there are only finitely many possibilities of $K^{0}$ and $K^{\text{\'{e}t}}$ such that $G/K$ is completely slope divisible by \cite[Lemma 2.5]{oz02}. Lemma \ref{subgrp uniqueness} implies that $K$ is determined from $K^{0}$ and $K^{\text{\'{e}t}}$ uniquely. This proves the statement.
\end{proof}

\section{Some extension properties on log regular schemes}\label{sec 4}

In this section, we study log finite groups and log $p$-divisible groups over log regular bases.

\subsection{Log regular schemes}

In this subsection, we recall the definition of log regularity and some properties of log regular log schemes.

\begin{dfn}[\cites{kat94,niz06}]
    Let $S$ be a locally noetherian fs log scheme. Let $s\in S$ and let $\bar{s}$ be a geometric point on $s$. Let $I(\bar{s})$ be the ideal of $\cO_{S, \bar{s}}$ generated by the image of the map $\cM_{S, \bar{s}}\setminus \cO_{S, \bar{s}}^{\times}\to \cO_{S, \bar{s}}$. We say that $S$ is \emph{log regular} at $s$ if the following two conditions are satisfied:
    \begin{enumerate}
        \item $\cO_{S, \bar{s}}/I(\bar{s})$ is a regular local ring;
        \item $\dim(\cO_{S, \bar{s}})=\dim(\cO_{S, \bar{s}}/I(\bar{s}))+\rk(\cM_{S, \bar{s}}^{\mathrm{gp}}/\cO_{S, \bar{s}}^{\times})$.
    \end{enumerate}
    The log scheme $S$ is called \emph{log regular} if it is log regular at every point $s\in S$. 
\end{dfn}

\begin{prop}[Kato]\label{log reg}
    For a locally noetherian fs log scheme $S$, the following statements are true. 
    
    (1) The subset of $S$ defined as \{$s\in S$ $|$ $S$ is log regular at $s$\} is stable under generalization.
    
    (2) If $S$ is log regular at $s\in S$, $\mathring{S}$ is normal at $s$.
\end{prop}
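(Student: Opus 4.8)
The plan is to reduce both assertions to Kato's structure theorem for log regular local rings. Write $R=\cO_{S,\bar s}$, $\bar M=\overline{\cM_{S,\bar s}}$, and $I=I(\bar s)$. Kato's theorem describes the completion $\hat R$ as a formal power series ring over a monoid algebra attached to the fs sharp monoid $\bar M$: in the equicharacteristic case there is an isomorphism $\hat R\cong k[[\bar M]][[t_1,\dots,t_d]]$ with $d=\dim(R/I)$ and $k$ the residue field, and in the mixed characteristic case $k$ is replaced by a Cohen ring; here $k[[\bar M]]$ denotes the completion of the monoid algebra at its maximal monomial ideal, and the rank $\mathrm{rk}(\bar M^{\mathrm{gp}})$ accounts for the monomial directions in condition (2). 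I would take this presentation as the main input.

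For (2), normality follows from the presentation. Since $\bar M$ is fine and saturated, it is classical that the monoid algebra of a saturated (integral) monoid is integrally closed, and its completion at the monomial ideal remains normal; forming a power series ring over a normal noetherian ring again yields a normal ring, so $\hat R$ is normal. Normality then descends along the faithfully flat map $R\to\hat R$: one checks Serre's conditions $R_1$ and $S_2$ for $R$ from those of $\hat R$ using faithful flatness, which is the harmless direction requiring no excellence hypothesis. Hence $R$ is normal and $S$ is normal at $s$.

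For (1), let $s'$ be a generalization of $s$, corresponding to a prime $\mathfrak p\subset R$, so that $\cO_{S,\bar{s}'}=R_{\mathfrak p}$ carries the induced log structure. The elements of $\bar M$ whose images in $R$ lie outside $\mathfrak p$ form a face $F$, and $\overline{\cM_{S,\bar{s}'}}$ is the sharpening of the localization $\bar M_F$, so that $\mathrm{rk}((\bar M')^{\mathrm{gp}})=\mathrm{rk}(\bar M^{\mathrm{gp}})-\mathrm{rk}(F^{\mathrm{gp}})$; correspondingly $I(\bar{s}')$ is the ideal of $R_{\mathfrak p}$ generated by the images of $\bar M\setminus F$. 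Condition (1) for $R_{\mathfrak p}$ is then straightforward, since $R/I$ is regular and localizations of regular rings are regular. The real work is condition (2), where one must match the drop $\dim R-\dim R_{\mathfrak p}$ against the simultaneous drops in $\dim(R/I)$ and in $\mathrm{rk}(\bar M^{\mathrm{gp}})$ as the monoid passes to $\bar M_F$. I expect this dimension bookkeeping to be the main obstacle: it amounts to showing that localizing at $\mathfrak p$ distributes the lost codimension correctly between the monomial directions recorded by $F$ and the regular directions $t_i$. I would resolve it by reading the dimensions off the explicit power series presentation of $\hat R$, where passing to the face $F$ corresponds to a toric localization and the additivity of the two contributions in the dimension formula is transparent.
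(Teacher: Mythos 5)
The paper offers no argument of its own here: part (1) is quoted as \cite[Proposition 7.1]{kat94} and part (2) as \cite[Theorem 4.1]{kat94}, so the only thing to compare your proposal against is Kato's original proofs. Your treatment of (2) follows the standard route (Kato's structure theorem \cite[Theorem 3.2]{kat94}, normality of the completed monoid algebra and of power series rings over it, descent of $R_1$ and $S_2$ along the faithfully flat map $R\to\hat R$) and is essentially correct in the equicharacteristic case. One caveat: in mixed characteristic the structure theorem presents $\hat R$ as a quotient $C[[\bar M]][[t_1,\dots,t_d]]/(\theta)$, not as the power series ring itself, so normality of $\hat R$ is not just ``normal monoid algebra, then power series''; one must handle the hypersurface (Kato does this via Serre's criterion together with Cohen--Macaulayness). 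This is irrelevant for the present paper, which works over $\bF_p$, but it matters for the proposition as stated.

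For (1) there is a genuine gap. Your claim that condition (1) of log regularity for $R_{\mathfrak p}$ is ``straightforward, since localizations of regular rings are regular'' is incorrect: $R_{\mathfrak p}/I(\bar s')$ is \emph{not} a localization of $R/I$. The ideal $I(\bar s')$ is generated only by the images of $\bar M\setminus F$, whereas $IR_{\mathfrak p}$ is generated by all of $\bar M\setminus\{1\}$; whenever $F\neq\{1\}$ one has $(R/I)_{\mathfrak p}=0$, while $R_{\mathfrak p}/I(\bar s')$ is a nonzero ring whose regularity is precisely what has to be proved. Moreover, the dimension identity that you correctly identify as the main obstacle is deferred to ``reading dimensions off $\hat R$,'' but localization at $\mathfrak p$ does not commute with completion, so $\dim R_{\mathfrak p}$ cannot literally be read off the power series presentation: one must pass to a prime of $\hat R$ over $\mathfrak p$, control heights along the faithfully flat map, and invoke that $R$ is catenary --- which in Kato's development comes from the Cohen--Macaulayness established alongside (2). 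In short, (2) is essentially complete modulo the mixed-characteristic caveat, but the content of Kato's Proposition 7.1 lies exactly in the two points your sketch leaves open.
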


\begin{proof}

    (1) See \cite[Proposition 7.1]{kat94}.
    
    (2) See \cite[Theorem 4.1]{kat94}.
\end{proof}

\begin{lem} \label{log reg stab}
    Let $S$ be a log regular fs log scheme such that $\mathring{S}=\Spec R$ for a noetherian strict local ring $R$. Let $s$ be the unique closed point of $S$. Fix a chart $P\to \cM_S$ which is neat at $s$. Then, for a sharp fs monoid $Q$ with a Kummer map $P\to Q$, the log scheme $T\coloneqq S\times_{\bA_{P}} \bA_{Q}$ is also log regular.
\end{lem}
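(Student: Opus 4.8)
The plan is to reduce the assertion to an explicit computation with completed local rings, using Kato's structure theorem for log regular local rings. First I would observe that log regularity at a point is insensitive to completion: the two defining conditions involve only the regularity of $\cO_{S,\bar s}/I(\bar s)$, the dimension of the local ring, and $\rk(\cM_S^{gp}/\cO_S^\times)$, all of which are preserved by passing to the completion (regularity ascends and descends along completion of a Noetherian local ring, dimension is unchanged, and the monoid rank is read off the chart). Hence it suffices to verify log regularity of $S':=S\times_{\bZ[P]}\bZ[Q]$ on completed local rings at closed points. Since $P\to Q$ is Kummer, $Q$ is finite over $P$ with $[Q^{gp}:P^{gp}]<\infty$, so $\bZ[P]\to\bZ[Q]$ is finite and $R':=R\otimes_{\bZ[P]}\bZ[Q]$ is a finite $R$-algebra; its $\mathfrak m_R$-adic completion is therefore $\widehat{R'}\cong \hat R\otimes_{\bZ[P]}\bZ[Q]$. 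Replacing $Q$ by its sharpening $\overline{\cM}_{S',\bar s'}$ where needed, I may assume $Q$ is sharp, hence toric.

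Next I would invoke Kato's structure theorem \cite{kat94}. As $R$ is log regular with neat chart $P\cong\overline{\cM}_{S,\bar s}$, in the equal characteristic situation relevant here (the ambient setting being over $\bF_p$, so $R$ is an $\bF_p$-algebra with residue field $k$) one has $\hat R\cong k[[P]][[t_1,\dots,t_d]]$ with $d=\dim(R/I)$, where $k[[P]]$ is the $(P^+)$-adic completion of the monoid algebra $k[P]$. Base changing then gives
\[
\widehat{R'}\cong\big(k[[P]]\otimes_{k[P]}k[Q]\big)[[t_1,\dots,t_d]],
\]
so everything concentrates on identifying $k[[P]]\otimes_{k[P]}k[Q]$, which is the $(P^+k[Q])$-adic completion of the finite $k[P]$-algebra $k[Q]$.

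The key step is to show this completion is again the toric local model $k[[Q]]$, and here the Kummer hypothesis is decisive. For sharp $Q$, every $q\in Q^+$ has a power $nq$ lying in $P^+$, so the ideal $P^+k[Q]$ has radical equal to the vertex ideal $Q^+k[Q]$. Consequently the special fibre $k[Q]/P^+k[Q]$ is Artinian local, $R'$ is in fact local, the $(P^+k[Q])$-adic and $(Q^+k[Q])$-adic topologies coincide, and $k[[P]]\otimes_{k[P]}k[Q]\cong k[[Q]]$. Thus $\widehat{R'}\cong k[[Q]][[t_1,\dots,t_d]]$. From this model I would read off log regularity directly: with chart $Q$ the ideal $I(\bar s')$ is generated by $Q^+$, so $\widehat{R'}/I(\bar s')\cong k[[t_1,\dots,t_d]]$ is regular (condition (1)); and $\dim\widehat{R'}=\dim k[[Q]]+d=\rk(Q^{gp})+d$, while $\rk(\cM_{S'}^{gp}/\cO_{S'}^\times)=\rk(Q^{gp})=\rk(P^{gp})$ (equal rank, since $P\to Q$ is Kummer) and $\dim(\widehat{R'}/I(\bar s'))=d$, giving condition (2). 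Hence $S'$ is log regular at every closed point, and by Proposition \ref{log reg}(1), stability of the log regular locus under generalization, it is log regular everywhere.

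I expect the main obstacle to be the middle step: justifying that completion commutes with the finite base change and, above all, that the Kummer property forces $P^+k[Q]$ to be $Q^+$-primary, so that the completed base change is exactly $k[[Q]]$ rather than a ring with extra components. The bookkeeping for non-sharp $Q$, for residue field extensions, and (if one wishes to drop the $\bF_p$-assumption) for the more intricate mixed characteristic form of Kato's structure theorem would require care, although the toric computation governing the monoid part — which is the heart of the argument — is unchanged in all cases.
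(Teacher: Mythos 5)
Your argument is correct in substance but takes a genuinely different, and considerably heavier, route than the paper. The paper neither completes nor invokes Kato's structure theorem: it observes that sending $Q\setminus\{1\}$ to $0$ induces mutually inverse ring homomorphisms between $R/I(\bar s)$ and $(R\otimes_{\bZ[P]} \bZ[Q])/I(\bar t)$ --- precisely because the Kummer hypothesis forces every element of $Q^{+}$ into $I(\bar t)$ --- so condition (1) for $S\times_{\bZ[P]} \bZ[Q]$ is inherited verbatim from $S$, and condition (2) follows from $\rk(Q^{\mathrm{gp}})=\rk(P^{\mathrm{gp}})$ together with the fact that the finite, $\Spec$-surjective extension $R\to R\otimes_{\bZ[P]}\bZ[Q]$ preserves dimension. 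Your key observation that $P^{+}k[Q]$ is $Q^{+}$-primary is the same use of the Kummer hypothesis, transported to the completed toric model; the completion buys you an explicit normal form $k[[Q]][[t_1,\dots,t_d]]$ from which both conditions are visible, at the cost of Cohen--Kato structure theory and the commutation of completion with finite base change. The one caveat worth flagging is characteristic: the lemma as stated, and its applications in Propositions \ref{log purity} and \ref{log loc free faithful}, carry no hypothesis that $R$ contains a field, so to prove the statement as written you would have to run the mixed-characteristic form of Kato's theorem (with the extra relation $\theta$), whereas the paper's direct computation with $R/I(\bar s)$ is characteristic-free. The sharpness bookkeeping you defer (that $Q^{\times}$ is finite torsion when $P$ is sharp and $P\to Q$ is Kummer) is equally elided in the paper's proof, so it is not a gap relative to the paper.
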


\begin{proof}
    First, we claim that $R\otimes_{\bZ[P]} \bZ[Q]$ is a noetherian strict local ring. Since $R\otimes_{\bZ[P]} \bZ[Q]$ is finite over $R$, it suffices to prove that the special fiber of $T\to S$ consists of a single point. Let $k$ be the residue field of $R$. Since $Q$ is sharp and $P\to Q$ is injective, sending $Q\backslash \{1\}$ to $0$ defines a ring map $k\otimes_{\bZ[P]} \bZ[Q] \to k$. This ring map induces a ring map $(k\otimes_{\bZ[P]} \bZ[Q])_{\mathrm{red}} \to k$, which is the inverse of the natural map $k\to (k\otimes_{\bZ[P]} \bZ[Q])_{\mathrm{red}}$. Hence, we get an isomorphism $k\cong (k\otimes_{\bZ[P]} \bZ[Q])_{\mathrm{red}}$, and so the special fiber of $T\to S$ consists of a single point.
    
    By Proposition \ref{log reg} (1), it suffices to prove that $T$ is log regular at the unique closed point $t$. Since $R\to R\otimes_{\bZ[P]} \bZ[Q]$ is finite and injective, we have the equation $\mathrm{dim}(R)=\mathrm{dim}(R\otimes_{\bZ[P]} \bZ[Q])$. Since $P\to Q$ is Kummer, the following equations hold: 
    \[
    \mathrm{rk}(\cM_{S,s}^{\mathrm{gp}}/\cO_{S,s}^{\times})=\mathrm{rk}(P^{\mathrm{gp}})=\mathrm{rk}(Q^{\mathrm{gp}})=\mathrm{rk}(\cM_{T,t}^{\mathrm{gp}}/\cO_{T,t}^{\times}).
    \]
    Since $S$ is log regular, it is enough to show the equation
    \begin{equation}
        \mathrm{dim}(R/I(s))=\mathrm{dim}((R\otimes_{\bZ[P]} \bZ[Q])/I(t)).
    \end{equation}
    The ideal $I(s)\subset R$ (resp. $I(t)\subset R\otimes_{\bZ[P]} \bZ[Q]$) is generated by the image of $P\backslash \{1\}$ (resp. $Q\backslash \{1\}$). Sending $Q\backslash \{1\}$ to $0$ defines a ring morphism $(R\otimes_{\bZ[P]} \bZ[Q])/I(t)\to R/I(s)$, and it can be checked directly that this map is the inverse of the natural map $R/I(s)\to (R\otimes_{\bZ[P]} \bZ[Q])/I(t)$. In particular, we obtain the equation (4.1).
\end{proof}

\subsection{one-dimensional cases}

\begin{lem} \label{log reg dvr}
    Let $S$ be a log regular fs log scheme such that $\mathring{S}=\Spec R$ for a strict local discrete valuation ring $R$. Then the log structure of $S$ is either of the trivial one or the standard one (i.e. the log structure defined by the unique closed point).
\end{lem}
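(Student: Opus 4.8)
The plan is to read off the characteristic monoid $\overline{\cM_{S,\bar s}}$ at the closed point $s$ directly from the log regularity condition, and then to identify the log structure case by case. Since $R$ is strictly henselian we have $\cO_{S,\bar s}=R$, so $\dim\cO_{S,\bar s}=1$, and $\overline{\cM_{S,\bar s}}$ is a sharp fs monoid; write $r:=\rk(\cM_{S,\bar s}^{gp}/\cO_{S,\bar s}^{\times})$ for the rank of its associated group. Condition (2) of log regularity at $s$ reads $1=\dim(R/I(\bar s))+r$, and since both summands are nonnegative integers we are forced into exactly one of the two cases $r=0$ or $r=1$.

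If $r=0$, then $\overline{\cM_{S,\bar s}}$ is a sharp fs monoid whose associated group vanishes, hence $\overline{\cM_{S,\bar s}}=0$ and the log structure is trivial at $s$. I would then propagate this to the generic point $\eta$: for a fine log structure the generization map $\overline{\cM_{S,\bar s}}\to\overline{\cM_{S,\bar\eta}}$ is surjective, so $\overline{\cM_{S,\bar\eta}}$ is trivial as well, and therefore $\cO_S^{\times}\to\cM_S$ is an isomorphism on all stalks, i.e.\ the log structure is trivial.

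If $r=1$, condition (1) forces $R/I(\bar s)$ to be a regular local ring of dimension $0$, hence a field, so $I(\bar s)=\mathfrak m=(\pi)$ with $\pi$ a uniformizer. A sharp fs monoid whose associated group has rank $1$ is isomorphic to $\bN$, so $\overline{\cM_{S,\bar s}}\cong\bN$. Using that $R$ is strictly henselian I would choose a neat chart $\bN\to\cM_S$ at $s$; the composite $\bN\to\cO_S$ sends the generator to an element $\varpi$ whose image generates $I(\bar s)=(\pi)$, i.e.\ to a uniformizer, hence to a nonzerodivisor. The log structure is then the one associated to the prelog structure $\bN\to\cO_S,\ 1\mapsto\varpi$, and I would identify this with the divisorial (canonical) log structure of the closed point, namely the subsheaf of $\cO_S$ of sections that are invertible on $\eta=S\setminus\{s\}$; concretely, at $\bar s$ the structure map yields an isomorphism $\cM_{S,\bar s}\xrightarrow{\sim}R\setminus\{0\}$, and at $\bar\eta$ the log structure is trivial because $\varpi$ becomes a unit.

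The routine steps (the dimension bookkeeping and the classification of sharp fs monoids of rank $\le 1$) are easy; the main obstacle is the final identification in the case $r=1$, where knowing $\overline{\cM_{S,\bar s}}\cong\bN$ only pins down the characteristic monoid and must be upgraded to a statement about the actual log structure $\cM_S\to\cO_S$. The key point to verify carefully is that the log structure associated to a single nonzerodivisor generating the maximal ideal of a DVR coincides with the canonical log structure attached to the closed point; once this is checked at both the stalk at $s$ and the generic point $\eta$, the two log structures agree as subsheaves of $\cO_S$ and the proof is complete.
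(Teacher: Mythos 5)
Your proposal is correct and follows essentially the same route as the paper: both arguments read off from the log regularity dimension formula that $\rk(\overline{\cM_{S,\bar s}}^{\rm gp})\leq 1$, classify the sharp fs characteristic monoid as $0$ or $\bN$, and in the latter case use a neat chart together with the fact that $R/I(\bar s)$ is a field to see that the generator maps to a uniformizer, identifying the log structure as the canonical one. The extra care you take in upgrading the statement about $\overline{\cM_{S,\bar s}}$ to one about $\cM_S$ itself (checking stalks at $s$ and $\eta$) is exactly the implicit content of the paper's last sentence.
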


\begin{proof}
     Let $s$ denote the unique closed point of $S$. The condition $\rk((\overline{\cM_{S, s}})^{\mathrm{gp}})\leq 1$ implies that the monoid $\overline{\cM_{S, s}}$ is isomorphic to either of $0$ or $\bN$. In the former case, the log structure of $S$ is trivial. In the latter case, we can take a chart $\bN\to \cM_{S, s}$ which is neat at $s$. Since $R/I(s)$ is a field, the morphism $\bN\to \cM_{S, s}\to R$ maps $1$ to a uniformizer of $R$. Hence, the log structure of $S$ is the standard one.
\end{proof}

\begin{prop}\label{ext of subgrp over log reg dvr}
    Let $S$ be a log regular fs log scheme such that $\mathring{S}=\Spec R$ for a henselian discrete valuation ring $R$. Let $K$ be the fraction field of $R$. Let $G$ be a log finite group scheme over $S$ and $H_{K}$ be a log finite subgroup scheme of $G|_{K}$. Then $H_{K}$ uniquely extends to a log finite subgroup scheme of $G$.
\end{prop}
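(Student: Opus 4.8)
The plan is to trivialize the log structure by a ramified Kummer base change, extend $H_K$ by scheme-theoretic closure in the resulting classical situation, and then descend the extension back to $S$. By Lemma \ref{log reg dvr} the log structure of $S$ is either trivial or the canonical one attached to the closed point. If it is trivial, then $G$ and $H_K$ are classical, and $H$ is simply the scheme-theoretic closure of $H_K$ in $G$: over the DVR $R$ this closure is finite flat and, by density, a subgroup scheme, and it is the unique flat extension. This is the special case $n=1$ of the argument below, so I will concentrate on the canonical case, given by a neat chart $\bN\to\cM_S$ with $1\mapsto\pi$ for a uniformizer $\pi$.

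First I would make $G$ classical. By Lemma \ref{log fin grp cl} there is an integer $n\geq 1$ such that, for the Kummer cover $S':=S\times_{\bZ[\bN],\,\times n}\bZ[\bN]$, the restriction $G|_{S'}$ is classical. Its underlying ring is $R'=R[y]/(y^n-\pi)$, a henselian DVR totally ramified over $R$ with uniformizer $y$, and by Lemmas \ref{log reg stab} and \ref{log reg dvr} the scheme $S'$ is again log regular with canonical log structure. Since the log structure over $K':=\mathrm{Frac}(R')$ is trivial, $H_{K'}:=H_K|_{K'}$ is a subgroup scheme of the classical group $G|_{S'}$, hence classical by Proposition \ref{sub of cl is cl}. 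I then set $H'$ to be the scheme-theoretic closure of $H_{K'}$ in $G|_{S'}$; over the DVR $R'$ this is finite flat and, by density, a subgroup scheme, so $H'\in(\fin/S')_{\rm{c}}$ extends $H_{K'}$.

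The main step is to descend $H'$ along $S'\to S$, and this is where I expect the real work. As in the construction of the pushforward functor above, the fs fibre product $S'':=S'\times_S S'$ has underlying scheme $\Spec R'[u]/(u^n-1)$, and both projections $p_1,p_2\colon S''\to S'$ are finite flat. Note $S''$ is non-reduced whenever $p\mid n$, so one cannot compare $p_1^*H'$ and $p_2^*H'$ through a generic fibre; this is the main obstacle. To circumvent it I would use that scheme-theoretic closure commutes with the flat base changes $p_1,p_2$. Writing $q\colon S''\to S$ for the common composite $(S'\to S)\circ p_i$, functoriality of pullback gives $p_1^*H_{K'}=q^*H_K=p_2^*H_{K'}$ as subschemes of $G|_{S''}$ over the generic locus $\{\pi\neq 0\}\subset S''$, and taking closures yields $p_1^*H'=p_2^*H'$ inside $G|_{S''}$. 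This is precisely a descent datum for the subsheaf $H'\subset G|_{S'}$; since $G$ is a sheaf on $(\fs/S)_{\kfl}$, subobjects glue (the cocycle condition being automatic for subobjects of the fixed sheaf $G$), so there is a unique subgroup sheaf $H\subset G$ with $H|_{S'}=H'$. By construction $H|_{S'}$ is classical, hence $H\in(\fin/S)_{\rm{f}}$, and $H|_K=H_K$ because the two subsheaves of $G_K$ agree after the fppf base change $\Spec K'\to\Spec K$.

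Finally I would check $H\in(\fin/S)_{\rm{d}}$. Over the henselian local base $S$ the connected-\'etale sequences of $H\subset G$ and of the Cartier-dual surjection $G^{\vee}\twoheadrightarrow H^{\vee}$ give $H^{0}\subset G^{0}$ and $(H^{\vee})^{0}$ as a quotient of $(G^{\vee})^{0}$; as $G\in(\fin/S)_{\rm{d}}$ both $G^{0}$ and $(G^{\vee})^{0}$ are classical, so $H^{0}$ and $(H^{\vee})^{0}$ are classical by Proposition \ref{sub of cl is cl}, whence $H,H^{\vee}\in(\fin/S)_{\rm{r}}$ by Proposition \ref{conn et}(2) and therefore $H\in(\fin/S)_{\rm{d}}$. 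For uniqueness, any log finite subgroup scheme $\widetilde H\subset G$ extending $H_K$ restricts on $S'$ to a classical subgroup scheme (Proposition \ref{sub of cl is cl}) extending $H_{K'}$, hence equals $H'$ by uniqueness of flat extensions over $R'$; as $\{S'\to S\}$ is a cover, $\widetilde H=H$ as subsheaves of $G$.
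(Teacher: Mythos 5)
Your proof follows the paper's argument essentially step for step: reduce via Lemma \ref{log reg dvr} to the trivial or canonical log structure, pass to the Kummer cover $\Spec R[\pi^{1/n}]$ on which $G$ becomes classical, extend by scheme-theoretic closure over the DVR $R'$, and descend by checking that the two pullbacks to $S'\times_S S'\cong \Spec (R'[u]/(u^n-1))$ agree on the generic locus. Your handling of the descent step (scheme-theoretic closure commuting with the flat projections, to deal with the non-reducedness of $S'\times_S S'$) is if anything slightly more careful than the paper's appeal to density of $\Spec(K'\otimes_K K')$, but it is the same underlying idea, as is the final verification that $H$ lands in $(\fin/S)_{\rm d}$.
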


\begin{proof}
    The limit argument (Proposition \ref{lim log fin grp sch}) and Galois descent allow us to assume that $R$ is strictly local. By Lemma \ref{log reg dvr}, the log structure of $S$ is either of the trivial one or the standard one. In the former case, $G$ is classical, and so the scheme theoretic closure of $H_{K}$ in $G$ is the unique extension of $H_{K}$.

    It is enough to consider the latter case. We fix a uniformizer $\pi\in R$. By Lemma \ref{log fin grp cl}, there exists an integer $n\geq 1$ such that $G|_{T}$ is classical, where $T$ is $\Spec R[\pi^{1/n}]$ equipped with the standard log structure. Let $L\coloneqq K(\pi^{1/n})$ be the fraction field of $R[\pi^{1/n}]$. Let $T'\coloneqq T\times_{S} T$. We have the following commutative diagram in which every square is a cartesian diagram in the category of saturated log schemes:
    \[
    \begin{tikzcd}
        T' \ar[d,xshift=0.5ex] \ar[d,xshift=-0.5ex] & \mathrm{Spec}(L\otimes_{K} L) \ar[l,hook'] \ar[d,xshift=0.5ex] \ar[d,xshift=-0.5ex]  \\
        T \ar[d] & \mathrm{Spec}L \ar[l,hook'] \ar[d] \\
        S & \mathrm{Spec}K \ar[l,hook'].
    \end{tikzcd}
    \]
    Here, all log schemes in the right column are endowed with the trivial log structure, and all horizontal arrows are strict open immersion.
    
    By taking the scheme theoretic closure, we see that $H_{K}|_{L}\subset G|_{L}$ uniquely extends to a classical finite subgroup scheme $H_{T}$ of $G|_{T}$. As the pullbacks of $H_{T}$ by two projections $T'\to T$ coincide on the dense open subscheme $\Spec (L\otimes_{K} L)$ as subobjects of $G|_{L\otimes_{K} L}$, they coincide on the whole of $T'$. Hence, by Kummer log flat descent, $H_{T}$ descends to the subobject $H\subset G$ in the category $(\mathrm{fin}/S)_{\mathrm{f}}$ with an isomorphism $H_{K}\cong H|_{K}$. By Corollary \ref{sub of d is d}, $H$ is a log finite group scheme.
\end{proof}

\begin{rem}\label{need log reg}
Proposition \ref{ext of subgrp over log reg dvr} does not hold without the assumption that $S$ is log regular. Let $R$ be a henselian discrete valuation ring over $\bF_{p}$ with a uniformizer $\pi$, and $K$ be the fraction field of $R$. Let $S$ be the fs log scheme $\mathrm{Spec}R$ equipped with the log structure defined by $\bN\to R$ mapping $1$ to $\pi^{p}$ (which is not log regular). Let $\eta$ be the generic point of $S$ and $s$ be the closed point of $S$. By abuse of notation, we simply write $s$ for the log scheme $(\mathrm{Spec}k(s),\cM_{\mathrm{Spec}k(s)})$. As in Example \ref{log fin grp ass to q}, we can associate to $q\coloneqq \pi^{p}\in \bG_{m, \log}(S)$ an exact sequence of log finite group schemes over $S$
\begin{equation}
    0\to \mu_{p}\to G_{q}\to \bZ/p\bZ\to 0.
\end{equation}
The Kummer sequence
\[
0\to \mu_{p}\to \bG_{m,\mathrm{log}}\to \bG_{m,\mathrm{log}}\to 0
\]
induces the following commutative diagram in which both rows are exact:
    \[
     \begin{tikzcd}
      \pi^{p\bZ}R^{\times} \ar[r,"(-)^{p}"] \ar[d,hook] & \pi^{p\bZ}R^{\times} \ar[r] \ar[d,hook] \ar[rd,"0"] & \Ext^{1}_{(\mathrm{fs}/S)_{\mathrm{kfl}}}(\bZ/p\bZ, \mu_{p}) \ar[d] 
     \\
     K^{\times} \ar[r,"(-)^{p}"] & K^{\times} \ar[r] & \Ext^{1}_{(\mathrm{fs}/\eta)_{\mathrm{kfl}}}(\bZ/p\bZ, \mu_{p}).
     \end{tikzcd}
     \]
     By this diagram, we see that the generic fiber of the exact sequence (4.2) splits. We claim that the unique section $(\bZ/p\bZ)_{\eta}\rightarrowtail (G_{q})|_{\eta}$ does not extend to a log finite subgroup scheme of $G_{q}$. Assume that there exists such an extension $H\subset G_{q}$. Since the order of $H$ is a prime number $p$, $H$ is classical by Lemma \ref{prime order log fin grp}. Since the surjection $G_{q}\twoheadrightarrow \bZ/p\bZ$ admits no section, the composite $H\hookrightarrow G_{q}\twoheadrightarrow \bZ/p\bZ$ is not an isomorphism. Hence, the composite $H|_{s}\rightarrowtail G_{q}|_{s}\twoheadrightarrow (\bZ/p\bZ)_{s}$ is not an isomorphism, and so it is trivial. Therefore, we obtain $H|_{s}\cong \mu_{p,s}$ by (4.2). In particular, the multiplicative rank of $H$ jumps under the specialization. This is contradiction.
\end{rem}


\subsection{Extension properties on log regular schemes}

\begin{lem}\label{bit general purity}
    Let $X$ be a locally noetherian normal scheme and $f\colon Y\to X$ be a flat morphism of schemes. Let $U$ be a dense open subset of $X$ containing all points on $X$ of codimension $1$ and $V\coloneqq f^{-1}(U)$. Then the restriction functors
    \begin{align*}
        \mathcal{LF}(Y)&\to \mathcal{LF}(V) \\
        (\mathrm{fin}/Y)&\to (\mathrm{fin}/V)
    \end{align*}
    are fully faithful.
\end{lem}

\begin{proof}
    It is enough to prove the fully faithfulness of the former functor. By taking internal homomorphisms, the problem is reduced to showing that, for a vector bundle $\cE$ on $Y$, the restriction map $\Gamma(Y,\cE)\to \Gamma(V,\cE)$
    is an isomorphism. Let $i\colon V\hookrightarrow Y$ and $j\colon U\hookrightarrow X$ be natural open immersions. Then we have isomorphisms of $\cO_{Y}$-modules
    \begin{align*}
        i_{*}i^{*}\cE\cong \cE\otimes i_{*}\cO_{V}\cong \cE\otimes f^{*}j_{*}\cO_{U}\cong \cE\otimes f^{*}\cO_{X}\cong \cE,
    \end{align*}
    where the first isomorphism is the projection formula, the second one is the flat base change, and the third one follows from the assumption that $U$ is an open subset of a locally noetherian normal scheme $X$ containing all points of codimension $1$. Taking global sections on both sides, we obtain the statement.
\end{proof}

\begin{prop}\label{log purity}
    Let $S$ be a log regular fs log scheme and $U$ be a dense open subset of $S$.

    (1) The restriction functors
    \begin{align*}
        \mathcal{LLF}(S)&\to \mathcal{LLF}(U) \\
        (\mathrm{fin}/S)_{\mathrm{f}}&\to (\mathrm{fin}/U)_{\mathrm{f}}
    \end{align*}
    are faithful.

    (2) Suppose that $U$ contains all points on $S$ of codimension $1$. Then the restriction functors
    \begin{align*}
        \mathcal{LLF}(S)&\to \mathcal{LLF}(U) \\
        (\mathrm{fin}/S)_{\mathrm{f}}&\to (\mathrm{fin}/U)_{\mathrm{f}}
    \end{align*}
    are fully faithful.
\end{prop}

\begin{proof}
We shall prove only (2) because the same argument also works for (1). By Proposition \ref{log coordinate ring}, it is enough to prove the fully faithfulness of the former functor. Let $\cE_1, \cE_2$ be log vector bundles on $S$. It suffices to show that the map
    \[
    \Hom(\cE_1, \cE_2)\to \Hom(\cE_1|_U, \cE_2|_U)
    \]
    is bijective. By the limit argument (Proposition \ref{lim log loc free}), we may assume that $\mathring{S}$ is the spectrum of a strict local ring. Let $s$ be the unique closed point of $S$. Fix a chart $P\to \cM_S$ which is neat at $s$. 
    
    By Lemma \ref{log qcoh cl}, there exist an integer $n\geq 1$ such that $\cE_{1}|_{T}$ and $\cE_{2}|_{T}$ are classical, where we put $T\coloneqq S\times_{\bA_{P}} \bA_{P^{1/n}}$. Since $T$ is log regular by Lemma \ref{log reg stab}, it follows from Lemma \ref{log reg} (2) that $T$ is normal. Let $V$ be the preimage of $U$ by $T\to S$. Since $\mathring{T}\to \mathring{S}$ is a finite dominant morphism of normal schemes, the going down property holds for $\mathring{T}\to \mathring{S}$ by \cite[Proposition 00H8]{sp25}. Hence, $V$ contains all points on $T$ of codimension $1$, and so the restriction map
    \[
    \Hom(\cE_1|_T, \cE_2|_T)\to \Hom(\cE_1|_V, \cE_2|_V)
    \]
    is bijective. 

    Let $T':=T\times_S T$ and $V':=V\times_{U} V$. Note that $T'$ is not necessarily normal. Since there are isomorphisms
    \begin{align*}
        T'&\cong S\times_{\bA_{P}} \bA_{P^{1/n}}\times_{\bA_{P}} \bA_{P^{1/n}} \\ &\cong S\times_{\bA_{P}} \bA_{P^{1/n}}\times \bA_{P^{\mathrm{gp}}/nP^{\mathrm{gp}}} \\ 
        &\cong T\times \bA_{P^{\mathrm{gp}}/nP^{\mathrm{gp}}},
    \end{align*}
    natural projection morphisms $T'\to T$ are strict finite free. Hence, by Lemma \ref{bit general purity}, the restriction map
     \[
     \Hom(\cE_1|_{T'}, \cE_2|_{T'})\to \Hom(\cE_1|_{V'}, \cE_2|_{V'})
     \]
     is bijective. Then the statement follows from the following diagram in which both rows are exact:
     \[
     \begin{tikzcd}
         0 \ar[r] & \Hom(\cE_1, \cE_2) \ar[r] \ar[d] & \Hom(\cE_1|_T, \cE_2|_T) \ar[r] \ar[d,"\sim" sloped] & \Hom(\cE_1|_{T'}, \cE_2|_{T'}) \ar[d,"\sim" sloped]
     \\
     0 \ar[r] & \Hom(\cE_1|_U, \cE_2|_U) \ar[r] & \Hom(\cE_1|_V, \cE_2|_V) \ar[r] & \Hom(\cE_1|_{V'}, \cE_2|_{V'}).
     \end{tikzcd}
     \]
\end{proof}

The following proposition is a log version of the theorem of Tate and de Jong (\cite[Theorem 4]{tat67} and \cite[Corollary 1.2]{dj98}). This is proved in the case where $\mathring{S}$ is the spectrum of a discrete valuation ring by Bertapelle-Wang-Zhao in \cite[Theorem 5.19]{bwz24}. In general, we use the reduction to $1$-dimensional cases.

\begin{prop}\label{log Tate thm}
    Let $S$ be a log regular fs log scheme and $U$ be a dense open subset of $S$. Then the restriction functor
    \[
    (\mathrm{BT}/S)_{\mathrm{d}}\to (\mathrm{BT}/U)_{\mathrm{d}}
    \]
    is fully faithful.
\end{prop}

\begin{proof}
     Let $G$ and $H$ be log $p$-divisible groups over $S$. We shall prove the natural map
    \[
    \mathrm{Hom}(G,H)\to \mathrm{Hom}(G|_{U},H|_{U})
    \]
    is an isomorphism. The injectivity follows from Proposition \ref{log purity} (1). To show the surjectivity, we take a homomorphism $f_{U}\colon G|_{U}\to H|_{U}$. Fix an integer $n\geq 1$. By  \cite[Theorem 5.19]{bwz24} and Lemma \ref{log reg dvr}, there exists an open subset $V$ of $S$ containing $U$ and all points of codimension $1$ such that the homomorphism $f_{U,n}\colon G[p^{n}]|_{U}\to H[p^{n}]|_{U}$ induced from $f_{U}$ extends to a homomorphism $f_{V,n}\colon G[p^{n}]|_{V}\to H[p^{n}]|_{V}$. Furthermore, $f_{V,n}$ extends to a homomorphism $f_{n}\colon G[p^{n}]\to H[p^{n}]$ by Proposition \ref{log purity} (2). By Proposition \ref{log purity} (1), $f_{n}$ is the unique extension of $f_{U,n}$, and so, when $n$ varies, the system $\{f_{n}\}_{n\geq 1}$ gives a homomorphism $f\colon G\to H$ restricting to $f_{U}$.
\end{proof}

\begin{prop}\label{isog extends to isog}
    Let $S$ be a log regular fs log scheme and $U$ be a dense open subset of $S$. Let $f\colon G_{1}\to G_{2}$ be a homomorphism of log $p$-divisible groups over $S$. Suppose that $f|_{U}\colon G_{1}|_{U}\to G_{2}|_{U}$ is an isogeny. Then $f$ itself is an isogeny.
\end{prop}

\begin{proof}
    We may assume that $S$ is quasi-compact. Take an integer $d\geq 1$ such that $\mathrm{Ker}(f|_{U})$ is killed by $p^{d}$. Since $f|_{U}$ is surjective, there exists a unique homomorphism $g_{U}\colon G_{2}|_{U}\to G_{1}|_{U}$ such that $f|_{U}\circ g_{U}$ and $g_{U}\circ f|_{U}$ are the multiplication by $p^{d}$. By Proposition \ref{log Tate thm}, $g_{U}$ uniquely extends to a homomorphism $g\colon G_{2}\to G_{1}$, and both of $fg$ and $gf$ are the multiplication by $p^{d}$. In particular, $\mathrm{Ker}(f)$ is killed by $p^{d}$. Therefore, it follows from Proposition \ref{weaker def of isog} that $f$ is an isogeny.
\end{proof}

\section{Slope filtrations of log \texorpdfstring{$p$}--divisible groups}\label{sec 5}

The goal of this section is to prove Theorem \ref{ext of isog over log regular base}. We start with some technical lemmas.

\begin{lem}\label{fpqc descent of extendability}
    Let $f\colon Y\to X$ be a faithfully flat morphism of locally noetherian normal schemes. Let $U$ be a dense open subset of $X$ containing all points on $X$ of codimension $1$, and $V\coloneqq f^{-1}(U)$. Let $G_{U}$ be a finite locally free group scheme over $U$. Suppose that $G_{V}\coloneqq G_{U}|_{V}$ extends to a finite locally free group scheme $G_{Y}$ over $Y$. Then $G_{U}$ itself uniquely extends to a finite locally free group scheme over $X$.
\end{lem}

\begin{proof}
    Let $j\colon U\hookrightarrow X$ and $j'\colon V\hookrightarrow Y$ denote the natural open immersions. Let $f_{U}\colon V\to U$ be the restriction of $f$. For a finite locally free group scheme $H$ over a scheme $S$, we write $\cA_{H}$ for the  Hopf algebra object in $\mathcal{LF}(S)$ corresponding to $H$ (cf.~Proposition \ref{log coordinate ring}). By the flat base change theorem, there are isomorphisms 
    \begin{equation}
        f^{*}j_{*}\cA_{G_{U}}\cong j'_{*}(f_{U})^{*}\cA_{G_{U}}\cong j'_{*}\cA_{G_{V}}.
    \end{equation}
    Since $f$ has the going down property by the flatness of $f$, the open subset $V$ contains all points on $Y$ of codimension $1$. Hence, there are isomorphisms
    \begin{equation}
         j'_{*}\cA_{G_{V}}\cong j'_{*}{j'}^{*}\cA_{G_{Y}}\cong \cA_{G_{Y}},
    \end{equation}
    by the normality of $Y$. It follows from (5.1) and (5.2) that $f^{*}j_{*}\cA_{G_{U}}$ is a vector bundle on $Y$, and so $j_{*}\cA_{G_{U}}$ is also a vector bundle on $X$. Since the restriction functor $\mathcal{LF}(X)\to \mathcal{LF}(U)$ is fully faithful, the Hopf algebra structure on $\cA_{G_{U}}$ uniquely induces a Hopf algebra structure on $j_{*}\cA_{G_{U}}$, which corresponds to the desired finite locally free group scheme over $X$ extending $G_{U}$.
\end{proof}

\begin{lem}\label{purity of exactness}
    Let $X$ be a locally noetherian normal scheme and $U$ be a dense open subset of $X$ containing all points of codimension $1$. Let $G'\to G\to G''$ be a sequence of finite locally free group schemes over $X$. Suppose that $G''$ is finite \'{e}tale over $X$, and that the sequence
    \[
    0\to G'|_{U}\to G|_{U}\to G''|_{U}\to 0
    \]
    is exact. Then the sequence
    \[
    0\to G'\to G\to G''\to 0
    \]
    is also exact.
\end{lem}

\begin{proof}
    Since $G''$ is finite \'{e}tale over $X$, the morphism $G\to G''$ is finite flat. The image of $G\to G''$ is closed and contains a dense open subset $G''_{U}=G\times_{X} U$, and so $G\to G''$ is faithfully flat. What remains to be proved is that the composite $G'\to G\to G''$ is a zero map and that the induced homomorphism $G'\to \mathrm{Ker}(G\to G'')$ is an isomorphism. Both of them follow from the fully faithfulness of the restriction functor $(\mathrm{fin}/X)\to (\mathrm{fin}/U)$ (Lemma \ref{bit general purity}).
\end{proof}

\begin{prop}\label{purity of csd}
    Let $S$ be a log regular fs log scheme over $\bF_{p}$ with $\mathring{S}$ being regular, and let $U$ be a dense open subset of $S$ containing all points on $S$ of codimension $1$. Let $G_{U}$ be a completely slope divisible log $p$-divisible group over $U$. Then $G_{U}$ uniquely extends to a log $p$-divisible group over $S$.
\end{prop}

\begin{proof}
Since $G_{U}$ is completely slope divisible, there exists an exact sequence
    \[
    0\to G_{U}^{0}\to G_{U}\to G_{U}^{\et}\to 0
    \]
    as in Proposition \ref{dec of csd}. By \cite[Proposition 14]{zin01}, $G_{U}^{0}$ (resp.~$G_{U}^{\et}$) extends to a formal Lie group $H_{1}$ (resp.~an \'{e}tale $p$-divisible group $H_{2}$) over $\mathring{S}$.

    Note that $\cM_{S,\bar{s}}/\cO_{S, \bar{s}}^{\times}$ is a free monoid for all $s\in S$ by \cite[Chapter III, Theorem 1.11.6]{ogu18}. It is enough to prove the following two claims:
    \begin{enumerate}
        \item $G_{U}[p^{n}]$ uniquely extends to a log finite group scheme $G_{n}$ over $S$ for each $n\geq 1$;
        \item $G\coloneqq \varinjlim_{n\geq 1} G_{n}$ is a log $p$-divisible group over $S$.
    \end{enumerate}
    Hence, the limit argument (Proposition \ref{lim log fin grp sch}) allows us to assume that $\mathring{S}=\mathrm{Spec}R$ for a strict local ring $R$. Let $s\in \mathring{S}$ be the unique closed point. Take a chart $\bN^{r}\to \cM_{S,s}$ which is neat at $s$. Let $\alpha\colon \bN^{r}\to R$ denote the composite map $\bN^{r}\to \cM_{S,s}\to R$. By the log regularity of $S$, the sequence $(\alpha(e_{1}),\dots,\alpha(e_{r}))$ can be extended to a system of parameter of $R$, where $e_{i}$ is the $i$-th standard basis of $\bN^{r}$ for $1\leq i\leq r$. 
    
    We shall prove the claim (1) for a fixed $n\geq 1$. By Lemma
    \ref{log fin grp cl}, there exists an integer $m\geq 1$ such that $G_{U}[p^{n}]|_{V}$ is classical, where we set
    $T\coloneqq S\times_{\bA_{\bN^{r}}} \bA_{\frac{1}{m}\bN^{r}}$ and $V$ to be the preimage of $U$ by $T\to S$. Since the sequence $(\alpha(e_{1}),\cdots,\alpha(e_{r}))$ can be extended to a system of parameter of $R$, the scheme $\mathring{T}$ is regular.
    We have an exact sequence of finite locally free group schemes over $\mathring{V}$
    \begin{equation}
        0\to H_{1}[p^{n}]|_{V}\to G_{U}[p^{n}]|_{V}\to H_{2}[p^{n}]|_{V}\to 0.
    \end{equation}
    By the argument in \cite[page 7]{zin01} (or the fifth paragraph of the proof of \cite[Proposition 14]{zin01}), this sequence splits after taking the pullback by the $k$-th power of the Frobenius morphism $F_{\mathring{V}}^{k}\colon \mathring{V}\to \mathring{V}$ (which is a fpqc covering by the regularity of $\mathring{V}$) for some integer $k\geq 1$. In particular, the pullback of $G_{U}[p^{n}]|_{V}$ by $F_{\mathring{V}}^{k}$ extends to a finite locally free group scheme over $\mathring{T}$. Hence, by Lemma \ref{fpqc descent of extendability}, $G_{U}[p^{n}]|_{V}$ itself also extends to a finite locally free group scheme $G_{n,T}$ over $\mathring{T}$. By Lemma \ref{purity of exactness}, the exact sequence (5.3) uniquely extends to an exact sequence of finite locally free group schemes over $\mathring{T}$
    \begin{equation}
        0\to H_{1}[p^{n}]|_{T}\to G_{n,T}\to H_{2}[p^{n}]|_{T}\to 0.
    \end{equation}
    
    Since projection morphisms $T\times_{S} T\to T$ are strict finite free (see the proof of Proposition \ref{log purity}), the descent datum on $G_{U}[p^{n}]|_{V}$ over $V\times_{U} V$ uniquely extends to the descent datum on $G_{n,T}$ over $T\times_{S} T$ by Lemma \ref{bit general purity}. Therefore, $G_{n,T}$ descends to an object $G_{n}\in (\mathrm{fin}/S)_{\mathrm{f}}$ by Kummer log flat descent. Since the exact sequence (5.4) descends to an exact sequence
    \begin{equation}
        0\to H_{1}[p^{n}]\to G_{n}\to H_{2}[p^{n}]\to 0,
    \end{equation}
    $G_{n}$ is a log finite group scheme over $S$ by \cite[Proposition 2.3]{kat23}. This proves the claim (1).

    By construction, $G_{n}$ is killed by $p^{n}$ for any integer $n\geq 1$. Due to Proposition \ref{log purity} (2), the sequence $G_{U}[p]\hookrightarrow G_{U}[p^{2}]\hookrightarrow \dots$ uniquely extends to a sequence
    \[
    G_{1}\to G_{2}\to \dots.
    \]
    To prove the claim (2), it is enough to prove that the following claims are true:
    \begin{itemize}
        \item $G_{n}\to G_{n+1}$ is an injection for any integer $n\geq 1$;
        \item the sequence
        \[
        0\to G_{n}\to G_{n+m}\stackrel{\times p^{n}}{\to} G_{m}\to 0
        \]
        is exact.
    \end{itemize}
    These claims follow from the exact sequence (5.5) and the fact that similar properties hold for systems $\{H_{1}[p^{n}]\}$ and $\{H_{2}[p^{n}]\}$.
\end{proof}


\begin{thm} \label{ext of isog over log regular base}
    Let $S$ be a locally noetherian log regular fs log scheme over $\bF_p$ and $U$ be a dense open subscheme of $S$. Let $G$ be a log $p$-divisible group over $S$. Suppose that we are given a completely slope divisible log $p$-divisible group $H_{U}$ over $U$ and an isogeny $f_{U}\colon G|_{U}\to H_{U}$. Then there exist a completely slope divisible log $p$-divisible group $H$ over $S$ and an isogeny $f\colon G\to H$ extending $f_{U}$.
\end{thm}

\begin{proof}
    By the limit argument (Proposition \ref{lim log fin grp sch}), we may assume that the $\mathring{S}$ is the spectrum of a strict local ring with the unique closed point $s$. Fix a chart $P\to \cM_{S}$ which is neat at $s$ and an integer $d\geq 1$ with $\Ker(f_{U})\subset G[p^d]|_{U}$. By Lemma \ref{log fin grp cl}, there exists an integer $n\geq 1$ such that $G[p^d]|_{T}$ is classical, where we set $T\coloneqq S\times_{\bA_{P}} \bA_{P^{1/n}}$. Let $V$ be the preimage of $U$ by $T\to S$.

    Then, Nizio{\l}'s desingularization theorem \cite[Theorem 5.10]{niz06} gives a log regular fs log scheme $T'$ with $\mathring{T'}$ being regular and a proper birational morphism $\pi\colon T'\to T$. By Proposition \ref{ext of subgrp over log reg dvr} and the limit argument, there exist an open subset $V'$ containing $\pi^{-1}(V)$ and all points on $T'$ of codimension $1$ and an isogeny $f'_{V'}\colon G|_{V'}\to H_{V'}$ extending $f|_{\pi^{-1}(V)}$. The log $p$-divisible group $H_{V'}$ uniquely extends to a log $p$-divisible group $H_{T'}$ over $T'$ by Proposition \ref{purity of csd}, and the isogeny $f'_{V'}$ uniquely extends to an isogeny $f'\colon G|_{T'}\to H_{T'}$ by Proposition \ref{log Tate thm} and Proposition \ref{isog extends to isog}.
    
    We shall prove that $f'$ descends to $T$ by using the method of Oort-Zink; see \cite[Proposition 2.7]{oz02}. Let $\cM\to T$ be the moduli (non-log) scheme of isogenies from $G|_{T}$ whose kernel is contained in $G[p^d]|_{T}$. The isogeny $f'$ induces a morphism $\varphi\colon T'\to \cM$ of schemes. It suffices to prove that $\varphi$ factors as $T'\to T\to \cM$. Since $\cO_{T}\cong \pi_{*}\cO_{T'}$ by the normality of $T$, it is enough to show that, for $x\in T$, the morphism $\varphi$ maps $\pi^{-1}(x)$ to a single point. This follows from Lemma \ref{fin csd quot} and the connectivity of $\pi^{-1}(x)$.

    As a result, we obtain a log $p$-divisible group $H_{T}$ over $T$ with $(H_{T})|_{T'}\cong H_{T'}$ and an isogeny $f_{T}\colon G|_{T}\to H_{T}$ with $(f_{T})|_{T'}=f'$. Since $\pi$ is birational, $f_{T}$ is the extension of $(f_{U})|_{V}$ by Proposition \ref{log purity} (1). Hence, by Lemma \ref{bit general purity}, the two pullbacks of $\mathrm{Ker}(f_{T})$ by two projection morphisms $T\times_{S} T\to T$ are equal as subobjects of $G[p^{d}]|_{T\times_{S} T}$. Therefore, by Kummer log flat descent, $\mathrm{Ker}(f_{T})$ descends to a log finite subgroup of $G[p^{d}]$. Let $H\coloneqq G/\mathrm{Ker}(f_{T})$ and $f\colon G\to H$ be the natural surjection. Then the isogeny $f$ restricts to the given isogeny $f_{U}$. By Corollary \ref{generically csd implies csd}, $H$ is completely slope divisible. This proves the statement.
\end{proof}

\appendix

\section{Limit arguments for log schemes}\label{appendix}

In this appendix, we shall prove some fundamental results on limits of log schemes which are well-known to experts. We note that Lemma \ref{lim log str} for fine log schemes can be deduced easily from the fact that Olsson's stack $\mathcal{L}og_{S}$ is of finite presentation (\cite[Theorem 1.1]{ols03}). Here, we shall give a more direct proof. 

Let $\{ S_{i}\}_{i\in I}$ be a cofiltered system of coherent log schemes. In this section, we say that $\{ S_{i}\}$ satisfies the \textit{condition $(\ast)$} if the following conditions are satisfied.
\[
(\ast)
\begin{cases}
\text{(1) The log scheme $S_i$ is quasi-compact and quasi-separated for all $i\in I$.} \\
\text{(2) An arbitrary transition morphism $S_i\to S_j$ is affine and strict.}
\end{cases}
\]

When $\{ S_{i}\}$ satisfies the condition $(\ast)$, there exists a coherent log scheme $S$ which is the limit of $\{ S_{i}\}$ in the category of log schemes, and one can describe $S$ explicitly as follows. The underlying scheme of $S$ is the limit of the underlying schemes of $\{ S_{i}\}$. The log structure $\cM_{S}$ is the pullback log structure of $\cM_{S_{i}}$ by the natural projection morphism $S\to S_{i}$ for some $i$, which is independent of the choice of $i$ by the assumption $(\ast)$. Obviously, when $S_{i}$ is fine (resp.~fs) for each $i\in I$, the log scheme $S$ is the limit of $\{S_{i}\}$ in the category of fine (resp.~fs) log schemes.

\begin{lem} \label{lim gl sec}
    Let $\{ S_{i}\}_{i\in I}$ be a cofiltered system of coherent log schemes satisfying the condition $(\ast)$. We put $S\coloneqq  \varprojlim_{i\in I} S_i$.  Then the canonical morphism
    \[
    \varinjlim_{i\in I} \Gamma(S_i, \cM_{S_i})\to \Gamma(S, \cM_{S})
    \]
    is an isomorphism.
\end{lem}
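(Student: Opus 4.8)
The plan is to reduce the statement about monoids to the known limit result for the underlying schemes, exploiting the explicit description of $S$ that was just recalled: its underlying scheme is $\varprojlim_i \underline{S_i}$ and its log structure is the unique one making every $S \to S_i$ strict. Since strictness means $\cM_{S} = \cO_{S}^{\times} \oplus_{\cO_{S_i}^{\times}} \cM_{S_i}$ pulled back appropriately, the sheaf $\cM_{S}$ is determined by the sheaf $\cM_{S_i}$ together with the structure map to $\cO_S$. First I would fix some index $0 \in I$ (the system is cofiltered, so without loss of generality I may work over a fixed $S_0$ after the usual reductions) and use that each transition morphism is affine and strict, so that $\Gamma(S_i, \cM_{S_i})$ and $\Gamma(S, \cM_{S})$ can be computed as global sections on affine underlying schemes.

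The key step is to split the monoid $\cM$ via the exact sequence $0 \to \cO^{\times} \to \cM \to \overline{\cM} \to 0$ and treat the unit part and the characteristic part separately. For the unit part, $\Gamma(S_i, \cO_{S_i}^{\times}) \to \Gamma(S, \cO_S^{\times})$ passes to the colimit correctly: this is the standard fact that global sections of $\bG_m$ commute with cofiltered limits of quasi-compact quasi-separated schemes along affine transition maps, which is part of the limit formalism for schemes (EGA IV, or \cite[Appendix]{niz08}-style arguments) and may be invoked for the underlying schemes. Because all transition maps are strict, the sheaf $\overline{\cM_{S_i}}$ pulls back to $\overline{\cM_{S}}$, so a section of $\overline{\cM}$ over $S$, being a section of a constructible-type sheaf over a quasi-compact base, descends to a section of $\overline{\cM_{S_i}}$ for $i$ large; injectivity on this part follows because two such sections agreeing on $S$ already agree on some $S_i$ by the same quasi-compactness argument. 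Combining the bijectivity on the unit part (via the scheme-level colimit) and on the quotient part (via strictness plus descent of sections of $\overline{\cM}$), a short diagram chase with the five-lemma-type reasoning for the exact sequences of monoids yields the bijectivity for $\cM$ itself.

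I expect the main obstacle to be the bookkeeping in the monoid exact sequence, since $\cM$ is a sheaf of monoids rather than of abelian groups and the five-lemma does not literally apply; one must argue bijectivity directly. Concretely, for surjectivity I would lift a section $m \in \Gamma(S, \cM_S)$ as follows: its image $\bar m \in \Gamma(S, \overline{\cM_S})$ descends to some $\bar m_i \in \Gamma(S_i, \overline{\cM_{S_i}})$, and then I must lift $\bar m_i$ to $\cM_{S_i}$ compatibly with $m$; the discrepancy between an arbitrary lift and $m$ lives in $\Gamma(S, \cO_S^{\times})$, which descends by the scheme-level result, so after adjusting by that unit and possibly enlarging $i$ the lift matches $m$. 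For injectivity, two sections of $\Gamma(S_i, \cM_{S_i})$ with the same image in $\Gamma(S, \cM_S)$ have the same image in $\overline{\cM}$ hence become equal in $\overline{\cM_{S_j}}$ for some $j \geq i$, and their ratio is then a unit that dies in $\Gamma(S, \cO_S^{\times})$, hence in $\Gamma(S_k, \cO_{S_k}^{\times})$ for some $k \geq j$. The care needed is to ensure all these descents can be taken over a common index, which is automatic since $I$ is cofiltered and only finitely many descents are invoked.
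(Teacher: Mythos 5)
Your overall strategy --- splitting along the exact sequence $1\to \cO^{\times}\to \cM\to \cM/\cO^{\times}$, handling the unit part by the scheme-level limit theorem and the characteristic part by strictness --- is the same decomposition the paper uses, but you carry it out entirely at the level of global sections, whereas the paper first proves the sheaf-level isomorphism $\varinjlim p_{i}^{-1}\cM_{S_{i}}\to \cM_{S}$ (where the outer terms of the sequence are compared stalkwise) and only afterwards passes to global sections via an auxiliary presheaf $\cM'_{S}$. This difference is not cosmetic: it is exactly where your surjectivity argument has a gap. You take $m\in\Gamma(S,\cM_{S})$, descend its image $\bar m$ to some $\bar m_{i}\in\Gamma(S_{i},\cM_{S_{i}}/\cO_{S_{i}}^{\times})$, and then compare $m$ with ``an arbitrary lift'' of $\bar m_{i}$ to $\Gamma(S_{i},\cM_{S_{i}})$. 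But such a global lift need not exist: the fibre of $\cM_{S_{i}}\to\cM_{S_{i}}/\cO_{S_{i}}^{\times}$ over $\bar m_{i}$ is an $\cO_{S_{i}}^{\times}$-torsor, whose class in $H^{1}(S_{i},\cO_{S_{i}}^{\times})$ is in general a nontrivial obstruction (already for a divisorial log structure, lifting a section of $\cM/\cO^{\times}$ means finding a global equation for a boundary divisor, which is obstructed by its class in $\Pic$). What you actually know is only that this torsor becomes trivial after pullback to $S$, because $m$ trivializes it there; to conclude you need the further input that an $\cO^{\times}$-torsor on $S_{i}$ which is trivial on $S$ is already trivial on some $S_{j}$ with $j\geq i$ (i.e.\ that $H^{1}(-,\cO^{\times})$ commutes with this cofiltered limit). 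As written, the step fails; with that input it goes through.

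The gap is fixable, either by invoking $\varinjlim \Pic(S_{i})\cong\Pic(S)$ for quasi-compact quasi-separated schemes with affine transition maps, or, as the paper does, by staying at the sheaf level so that no local-to-global lifting is ever needed and the only global-sections statement required is the general one for sections of a colimit of pullback sheaves over a qcqs limit. Two smaller points. First, your justification for descending sections of $\cM_{S}/\cO_{S}^{\times}$ (``a constructible-type sheaf'') is not the natural route: since all transition maps are strict, each $\cM_{S_{i}}/\cO_{S_{i}}^{\times}$ is the pullback of the single sheaf $\cM_{S_{0}}/\cO_{S_{0}}^{\times}$, so the standard limit theorem for sections of a pullback sheaf applies directly, with no constructibility needed. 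Second, in the injectivity step the phrase ``their ratio is then a unit'' tacitly uses that $\cO^{\times}$ acts freely on $\cM$, so that the local units comparing the two lifts are unique and glue to a global one; this is harmless but should be said.
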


\begin{proof}
    Let $p_{i}\colon S\to S_{i}$ be the projection morphism. We have the following diagram
    \[
     \begin{tikzcd}
     0 \ar[r] & \varinjlim p_{i}^{-1}\cO_{S_{i}}^{\times} \ar[r] \ar[d,"\sim" sloped] & \varinjlim p_{i}^{-1}\cM_{S_{i}} \ar[r] \ar[d] & \varinjlim p_{i}^{-1}(\cM_{S_{i}}/\cO_{S_{i}}^{\times}) \ar[d,"\sim" sloped]
     \\
     0 \ar[r] & \cO_{S}^{\times} \ar[r] & \cM_{S} \ar[r] & \cM_{S}/\cO_{S}^{\times} 
     \end{tikzcd}
     \]
     where the both rows are exact. Here, note that $p_{i}^{-1}\cM_{S_{i}}$ is not the pullback log structure but the inverse image of $\cM_{S_{i}}$ by $p_{i}$ as a sheaf. Hence, the canonical morphism $\varinjlim p_{i}^{-1}\cM_{S_{i}}\to \cM_{S}$ is an isomorphism. 

     For a quasi-compact and quasi-separated \'{e}tale morphism $U\to S$, we define a monoid $\cM'_{S}(U)$ as follows. We take $j\in I$, a quasi-compact and quasi-separated \'{e}tale morphism $U_{j}\to S_{j}$, and an isomorphism $U\cong U_{j}\times_{S_{j}} S$ over $S$. Here, we put $\cM'_{S}(U):=\displaystyle \varinjlim_{i\geq j} \cM_{S_{i}}(U_{j}\times_{S_{j}} S_{i})$. This definition is independent of the choice of $j$. Then $U\mapsto \cM'_{S}(U)$ defines a presheaf $\cM'_{S}$ on the category of quasi-compact and quasi-separated \'{e}tale morphisms to $S$. Since $\cM'_{S}$ satisfies the sheaf condition for quasi-compact and quasi-separated \'{e}tale coverings, $\cM'_{S}$ extends to an \'{e}tale sheaf on $S$ which is also denoted by $\cM'_{S}$. It follows from definition that $\cM'_{S}$ satisfies the same universal mapping property as $\varinjlim p_{i}^{-1}\cM_{S_{i}}$. Hence, the natural morphism $\varinjlim p_{i}^{-1}\cM_{S_{i}}\to \cM'_{S}$ is an isomorphism. The isomorphisms 
     \[
     \cM_{S}\cong \varinjlim p_{i}^{-1}\cM_{S_{i}}\cong \cM'_{S}
     \]
     give the desired isomorphism $\varinjlim \Gamma(S_i, \cM_{S_i})\cong \Gamma(S, \cM_{S})$
\end{proof}

\begin{lem} \label{lim log str}
    Let $\{ S_{i}\}_{i\in I}$ be a cofiltered system of quasi-compact and quasi-separated schemes in which every transition morphism is affine. We put $S\coloneqq \varprojlim_{i\in I} S_i$. 
    
(1) Suppose that the cofiltered category $I$ has a final object $0$. Let $\cM_0$, $\cN_0$ be log structures on $S_{0}$. Let $\cM_{i}$ (resp. $\cN_{i}$) denote the pullback log structure of $\cM_0$ (resp. $\cN_0$) by $S_{i}\to S_{0}$, and let $\cM$ (resp. $\cN$) denote the pullback log structure of $\cM_0$ (resp. $\cN_0$) by $S\to S_{0}$. Suppose that $\cM_{0}$ is coherent. Then the map
        \[
        \displaystyle \varinjlim_{i\in I} \Hom_{S_i}(\cM_{i}, \cN_{i}) \to \Hom_{S}(\cM, \cN) 
        \]
        is bijective, where $\Hom_{S_{i}}(\cM_{i}, \cN_{i})$ (resp. $\Hom_{S}(\cM, \cN)$) is the set of morphisms of log structures $\cM_{i}\to \cN_{i}$ (resp. $\cM\to \cN$).
        
(2) Let $\cM$ be a coherent (resp. fine) (resp. fs) log structure on $S$. Then, there exist $i\in I$ and a coherent (resp. fine) (resp. fs) log structure $\cM_i$ on $S_i$ such that $\cM$ is isomorphic to the pullback log structure of $\cM_i$.
\end{lem}

\begin{proof}
    (1) By working \'{e}tale locally on $S_{0}$, we may assume that there are a finitely generated monoid $P$ and a chart $P\to \cM_{0}$. Then the map in the statement factors as follows:
    \begin{align*}
    \displaystyle \varinjlim \Hom_{S_{i}}(\cM_{i}, \cN_{i})
    &\cong \varinjlim \Hom(P,\Gamma(S_{i},\cN_{i})) \\
    &\to \Hom(P,\varinjlim \Gamma(S_{i},\cN_{i})) \\
    &\to \Hom(P,\Gamma(S,\cN)) \\
    &\cong \Hom_{S}(\cM,\cN).
    \end{align*}
    Here, the second morphism is bijective by the fact that $P$ is finitely presented, and the third morphism is bijective by \ref{lim gl sec}. This proves (1).  

    (2) We prove only the assertion for coherent log structures because other assertion follows from the same argument. First, we consider the case where $\cM$ admits a chart $P\to \cM$ for a finitely generated monoid $P$. Since $P$ is finitely presented and $\varinjlim \Gamma(S_{i}, \cO_{S_{i}})$ is isomorphic to $\Gamma(S, \cO_{S})$, the morphism $P\to \cM\to \cO_{S}$ descends to a morphism $P\to \cO_{S_{i}}$ for some $i\in I$. Then the associated log structure to the morphism $P\to \cO_{S_{i}}$ is a desired one.
    
    We consider the general case. Take a quasi-compact and quasi-separated \'{e}tale covering $\pi\colon U\to S$ such that $\cM_{U}\coloneqq \pi^{*}\cM$ admits a chart. For some $i\in I$, the morphism $\pi$ descends to an \'{e}tale covering $\pi_{i}\colon U_{i}\to S_{i}$ and $\cM_{U}$ descends to a coherent log structure $\cM_{U_{i}}$ on $U_{i}$. Let $p_{k}\colon U\times_{S} U\to U$ and $p_{i,k}\colon U_{i}\times_{S_{i}} U_{i}\to U_{i}$ be natural projection morphisms for $k=1,2$. Due to the assertion (1), we may assume that the isomorphism $p_{1}^{*}\cM_{U}\cong p_{2}^{*}\cM_{U}$ descends to an isomorphism $p_{i,1}^{*}\cM_{U_{i}}\cong p_{i,2}^{*}\cM_{U_{i}}$ satisfying the cocycle condition over $U_{i}\times_{S_{i}} U_{i}\times_{S_{i}} U_{i}$ by replacing $i$ with a bigger one. This descent datum gives a coherent log structure $\cM_{i}$ on $S_{i}$ such that the pullback log structure of $\cM_{i}$ to $S$ is isomorphic to $\cM$. 
\end{proof}

\begin{prop}\label{lim log sch}
    Let $\{ S_{i}\}_{i\in I}$ be a cofiltered system of coherent log schemes satisfying the condition $(\ast)$. We put $S\coloneqq \varprojlim_{i\in I} S_i$. 
 
(1) Suppose that the cofiltered category $I$ has a final object $0$. Let $X_0$ and $Y_0$ be coherent log schemes over $S_{0}$. Suppose that $Y_{0}$ is of finite presentation over $S_{0}$. Then the natural map
        \[
        \varinjlim_{i\in I} \mathrm{Mor}_{S_{i}}(X_{0}\times_{S_{0}} S_i, Y_{0}\times_{S_{0}} S_i)\to \mathrm{Mor}_{S}(X_{0}\times_{S_{0}} S, Y_{0}\times_{S_{0}} S) 
        \]
        is bijective.
        
(2) Let $X$ be a coherent (resp. fine) (resp. fs) log scheme of finite presentation over $S$. Then there exist $i\in I$ and a coherent (resp. fine) (resp. fs) log scheme $X_i$ of finite presentation over $S_i$ with an isomorphism $X\cong X_{i}\times_{S_{i}} S$.
\end{prop}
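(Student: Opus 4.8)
The plan is to prove Proposition~\ref{lim log sch} by reducing both statements to the known limit theory for schemes (EGA~IV, \S8) combined with the log-structure limit results already established in Lemma~\ref{lim log str}. The key observation is that a coherent log scheme over a base is equivalent data to its underlying scheme together with its log structure, and the hypothesis $(\ast)$ guarantees that $S$ is computed as the limit of the underlying schemes equipped with the strict limit log structure, so the two layers of data can be handled separately.

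For part (1), I would first reduce to comparing morphisms of underlying schemes and then add the compatibility of log structures. A morphism $X_0\times_{S_0}S \to Y_0\times_{S_0}S$ of log schemes consists of a morphism $g$ of underlying schemes over $S$ together with a morphism $g^{*}\cM_{Y_0\times_{S_0}S}\to \cM_{X_0\times_{S_0}S}$ of log structures compatible with the structure maps. First I would apply the classical limit theorem for morphisms of finitely presented schemes (EGA~IV 8.8.2, using that $Y_0$ is of finite presentation over $S_0$) to see that any morphism of underlying schemes over $S$ descends to some $S_i$ and that two such descents agreeing over $S$ agree over some larger index. Then I would invoke Lemma~\ref{lim log str}(1) to descend and compare the accompanying morphism of log structures, noting that the pullback of the coherent log structure of $Y_0$ is coherent so the hypothesis of that lemma is met. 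Combining the bijectivity on underlying morphisms with the bijectivity on log-structure morphisms yields the bijectivity for log morphisms; this is essentially a diagram-chase gluing two limit isomorphisms.

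For part (2), the strategy is to descend the underlying scheme first and then the log structure. Given $X$ coherent of finite presentation over $S$, I would apply the classical descent of finitely presented schemes (EGA~IV 8.8.2, 8.10.5) to obtain $i\in I$ and a finitely presented scheme $\underline{X_i}$ over $\underline{S_i}$ whose base change to $\underline{S}$ recovers the underlying scheme of $X$. Next, since $X$ carries a coherent log structure $\cM_X$, I would apply Lemma~\ref{lim log str}(2) to $\underline{X}=\varprojlim \underline{X_i}\times_{\underline{S_i}}\underline{S_j}$ to descend $\cM_X$ to a coherent log structure on $\underline{X_i}$ (after possibly enlarging $i$). This requires checking that the cofiltered system of schemes $\{\,\underline{X_i}\times_{\underline{S_i}}\underline{S_j}\,\}_{j\ge i}$ is quasi-compact, quasi-separated, and has affine transition maps, which follows from $(\ast)$ together with finite presentation of $X_i$. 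Finally I would need to ensure the resulting log scheme $X_i$ is actually over $S_i$ as a morphism of log schemes, using part (1) to descend the structure morphism's effect on log structures.

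The main obstacle I expect is the bookkeeping in part (2) when verifying that the descended log structure and the descended underlying scheme together constitute a log scheme \emph{of finite presentation over $S_i$} \emph{as a log scheme}, rather than merely matching the two data separately over $S$. Specifically, one must check that the structure morphism $X_i\to S_i$ of log schemes pulls back correctly, which means the induced map $\cM_{S_i}\to \cM_{X_i}$ on log structures must be the one descending the given map over $S$; this is where part (1) is genuinely needed inside the proof of part (2), and where one must be careful that enlarging the index $i$ preserves all previously-achieved compatibilities simultaneously. The scheme-theoretic limit arguments are routine, but marshalling the log-structure descent coherently across the cover used in Lemma~\ref{lim log str}(2) is the delicate point.
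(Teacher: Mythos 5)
Your proposal is correct and follows essentially the same route as the paper, whose proof of this proposition is precisely the one-line reduction to the classical limit theorems for schemes (EGA~IV, \S 8) combined with Lemma~\ref{lim log str}; you have simply spelled out the diagram-chase that the paper leaves implicit. The extra care you take in part (2) about descending the structure morphism as a morphism of log schemes (via part (1)) is the right point to flag, and is consistent with the paper's intended argument.
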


\begin{proof}
    These statement are direct consequences of the analogous statements for classical schemes and Lemma \ref{lim log str}.
\end{proof}

\begin{lem}\label{lim chart}
    Let $\{ S_{i}\}_{i\in I}$ be a cofiltered system of coherent log schemes satisfying the condition $(\ast)$. We put $S\coloneqq \varprojlim_{i\in I} S_i$. Suppose that $S$ admits a chart $P\to \cM_{S}$ with a finitely generated monoid $P$. Then this chart comes from a chart $P\to \cM_{S_{i}}$ for some $i\in I$.
\end{lem}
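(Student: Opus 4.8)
The plan is to reduce the statement to two descent results already at hand: the computation of global sections of $\cM_S$ under the limit (Lemma \ref{lim gl sec}) and the descent of morphisms of coherent log structures (Lemma \ref{lim log str}). First I would observe that under condition $(\ast)$ the limit $S$ is quasi-compact and quasi-separated, and that a chart $P\to \cM_S$ is the same datum as a monoid homomorphism $u\colon P\to \Gamma(S, \cM_S)$ whose associated morphism of log structures is an isomorphism. Since a finitely generated commutative monoid is finitely presented, Lemma \ref{lim gl sec} together with the resulting isomorphism $\Gamma(S, \cM_S)\cong \varinjlim_i \Gamma(S_i, \cM_{S_i})$ yields $\Hom(P, \Gamma(S, \cM_S))\cong \varinjlim_i \Hom(P, \Gamma(S_i, \cM_{S_i}))$. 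Hence $u$ factors as $P\xrightarrow{u_j}\Gamma(S_j, \cM_{S_j})\to \Gamma(S, \cM_S)$ for some $j\in I$, giving a candidate chart $u_j$ on $S_j$.

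Next I would set up the relevant log structures. The homomorphism $u_j$ induces a morphism of coherent log structures $\theta_j\colon \cL_j\to \cM_{S_j}$ on $S_j$, where $\cL_j$ denotes the log structure associated to the prelog structure $P\xrightarrow{u_j}\Gamma(S_j, \cM_{S_j})\to \Gamma(S_j, \cO_{S_j})$. Because the transition morphisms in $(\ast)$ are strict, the inverse image of $\cM_{S_j}$ along $S_i\to S_j$ is $\cM_{S_i}$, and since the associated log structure commutes with inverse image, the inverse image of $\cL_j$ is the log structure $\cL_i$ associated to $P\to \Gamma(S_i, \cO_{S_i})$; similarly over $S$ we obtain $\cL$ and $\cM_S$. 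Pulling $\theta_j$ back gives compatible morphisms $\theta_i\colon \cL_i\to \cM_{S_i}$, and the pullback $\theta\colon \cL\to \cM_S$ to $S$ is exactly the morphism of log structures attached to the chart $u$, hence an isomorphism.

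It then remains to promote the isomorphism $\theta$ over $S$ to an isomorphism over some finite stage, and this is where I would invoke Lemma \ref{lim log str}(1). Both $\cL_j$ and $\cM_{S_j}$ are coherent log structures on $S_j$, so the maps $\varinjlim_{i\geq j}\Hom_{S_i}(\cM_{S_i}, \cL_i)\to \Hom_S(\cM_S, \cL)$, as well as the corresponding maps on the endomorphism monoids of $\cL$ and of $\cM$, are bijective. Applying bijectivity to the inverse $\theta^{-1}$ produces a morphism $\sigma_i\colon \cM_{S_i}\to \cL_i$ for some $i\geq j$; applying the injectivity part to $\sigma_i\circ\theta_i$ and $\theta_i\circ\sigma_i$, which agree with the respective identities after pullback to $S$, shows that they already coincide with the identities over some $S_{i'}$ with $i'\geq i$. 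For such $i'$ the morphism $\theta_{i'}$ is an isomorphism, so $u_{i'}\colon P\to \Gamma(S_{i'}, \cM_{S_{i'}})$ is a chart on $S_{i'}$ whose pullback to $S$ is the given chart.

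The hard part is precisely this last step: one cannot check that $\theta$ is an isomorphism fiberwise on the limit and read it off at a finite stage directly. Instead one must descend the \emph{inverse} morphism and then verify the two inversion relations at a finite level, which is exactly the content delivered by the bijectivity and injectivity in Lemma \ref{lim log str}(1). The only point requiring care is the compatibility of the associated log structure with the strict inverse image, which guarantees that the $\cL_i$ and $\cM_{S_i}$ are the pullbacks of $\cL_j$ and $\cM_{S_j}$ and hence genuinely satisfy the hypotheses of that lemma.
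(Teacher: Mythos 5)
Your proposal is correct and follows essentially the same route as the paper: descend the monoid homomorphism using finite presentation of $P$ (via Lemma \ref{lim gl sec}), form the associated log structure and the induced morphism to $\cM_{S_i}$, and invoke Lemma \ref{lim log str}(1) to upgrade the isomorphism over $S$ to one at a finite stage. The only difference is that you spell out the standard ``descend the inverse and verify the two composites'' step, which the paper leaves implicit.
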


\begin{proof}
Since $P$ is finitely presented, the given chart descends to a homomorphism $P\to \cM_{S_{i}}$ for some $i\in I$. Let $\cN_{i}$ be the log structure associated to the homomorphism $P\to \cM_{S_{i}}\to \cO_{S_{i}}$ and $\phi_{i}\colon \cN_{i}\to \cM_{S_{i}}$ be the morphism of coherent log structures induced from $P\to \cM_{S_{i}}$. Since the pullback of $\phi_{i}$ to $S$ is an isomorphism, the pullback of $\phi_{i}$ to $S_{j}$ is also an isomorphism for some $j\geq i$ by Lemma \ref{lim log str}(1). Then the homomorphism $P\to \cM_{S_{j}}$ induced from $P\to \cM_{S_{i}}$ is the desired chart.
\end{proof}

\begin{prop}\label{lim log properties}
    Let $\{ S_{i}\}_{i\in I}$ be a cofiltered system of fs log schemes satisfying the condition ($\ast$). We put $S\coloneqq \varprojlim_{i\in I} S_i$. Suppose that the cofiltered category $I$ has a final object $0$. Let $X_{0}$ be an fs log scheme of finite presentation over $S_{0}$. We put $X_{i}:=X_{0}\times_{S_{0}} S_{i}$ and $X:=X_{0}\times_{S_{0}} S$. Let $f_{i}\colon X_{i}\to S_{i}$ and $f\colon X\to S$ denote the natural morphisms.

    Let $\cP$ be one of the following properties of morphisms of fs log schemes. Then, if $f$ satisfies the property $\cP$, $f_{i}$ also satisfies the property $\cP$ for some $i\in I$. 

    \begin{enumerate}
        \item log flat
        \item log smooth
        \item log \'{e}tale
        \item Kummer log flat
    \end{enumerate}
\end{prop}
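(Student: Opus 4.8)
The plan is to characterise each of the four properties by a chart criterion, reducing $\cP$ for $f$ to a purely combinatorial condition on a monoid homomorphism together with a classical property of an induced morphism of schemes, and then to spread this data out from $S$ to some finite level using the limit lemmas of this appendix and the classical limit theory for schemes.

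Recall that each property $\cP$ in the list is local on source and target for the \'etale (indeed fppf) topology, and admits the following description: a morphism $g\colon X'\to S'$ of fs log schemes satisfies $\cP$ if and only if, locally, there is a chart $(P\to \cM_{S'},\ Q\to \cM_{X'},\ h\colon P\to Q)$ such that (i) $h$ satisfies a group-theoretic condition $C_{\cP}(h)$ --- for (4) that $h$ be injective with $\Coker(h^{\mathrm{gp}})$ finite, and the analogous finiteness of $\Ker(h^{\mathrm{gp}})$ and $\Coker(h^{\mathrm{gp}})$, of order invertible on $X'$, in cases (2) and (3) --- and (ii) the induced strict morphism $g'\colon X'\to S'\times_{\bZ[P]}\bZ[Q]$ of underlying schemes is flat (resp.\ smooth, resp.\ \'etale, resp.\ flat). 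Since $f$ is of finite presentation and $S$ is quasi-compact, $X$ is quasi-compact; so, assuming $f$ has $\cP$, I may fix a finite family of local witnesses for $f$: finite \'etale (or fppf) covers of $S$ and of $X$, charts $h_\lambda\colon P_\lambda\to Q_\lambda$ with $C_{\cP}(h_\lambda)$, and induced morphisms $g'_\lambda$ carrying the required classical property.

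Now observe that the system $\{X_i\}_{i\in I}$ again satisfies $(\ast)$: each transition morphism $X_i\to X_j$ is a base change of the affine strict morphism $S_i\to S_j$, hence affine and strict, and $X=\varprojlim_i X_i$. I may therefore apply the appendix results to both $\{S_i\}$ and $\{X_i\}$. By Proposition \ref{lim log sch} the covers descend to finite presentation morphisms over $S_i$ and $X_i$, and their surjectivity and flatness (or \'etaleness) persist for large $i$ by the classical limit theory for schemes; by Lemma \ref{lim chart} the charts descend, carrying the same monoids $P_\lambda,Q_\lambda$ and the same homomorphisms $h_\lambda$, so the combinatorial conditions $C_{\cP}(h_\lambda)$ are reproduced verbatim, while the invertibility on $X$ of the fixed integers occurring in (2) and (3) persists to $X_i$ for large $i$ by quasi-compactness. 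Finally, since inverse limits commute with the fibre products $-\times_{\bZ[P_\lambda]}\bZ[Q_\lambda]$, the targets of the $g'_\lambda$ form cofiltered systems with the expected limits, and the $g'_\lambda$ together with their classical properties arise, at a common index $i$, from finite level (see EGA IV, Sections 8, 11 and 17). Gathering these finitely many descents at a single large index $i$, the morphism $f_i$ then satisfies the chart criterion for $\cP$, whence $f_i$ has the property $\cP$.

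The main difficulty I anticipate is organisational rather than conceptual: one must verify that the four criteria genuinely split into a combinatorial part on $h$, transported for free under descent of charts (in particular the Kummer condition in (4) is purely a condition on $h_\lambda$), and a scheme-theoretic part on $g'$, which is exactly what the classical limit theorems govern; and one must arrange that all finitely many local witnesses, their compatibilities, and the reconstruction identifying the descended $g'_\lambda$ with the morphism attached to $f_i$ and the descended chart, hold at one index $i$. The strictness of the transition maps, which keeps the characteristic sheaves unchanged along the system, is what makes the combinatorial data transport cleanly.
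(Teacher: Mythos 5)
Your proposal is correct and follows essentially the same route as the paper: the paper's proof likewise reduces each property to its chart criterion (the definition for log flatness, Kato's Theorem 3.5 for log smooth/\'etale, Illusie--Nakayama--Tsuji for Kummer log flat) and then descends the charts to finite level via Lemma \ref{lim chart}, with the classical part handled by the scheme-theoretic limit theorems. The only caveat is a small imprecision in your statement of the log smooth criterion (only the torsion part of $\Coker(h^{\mathrm{gp}})$ need be finite of invertible order there), which does not affect the limit argument.
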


\begin{proof}
    These properties have a criterion using a chart of a morphism. For (1), this is just the definition. For (2) and (3), see \cite[Theorem 3.5]{kat89}. For (4), see \cite[Proposition 1.3]{int13}. Then the statement can be deduced from these criteria and Lemma \ref{lim chart}.
\end{proof}

\subsection*{Acknowledgments}
The author would like to thank his advisor, Tetsushi  Ito, for useful discussions and warm encouragement. He is also grateful to Kazuya Kato for useful discussions, and to the referee for carefully reading the manuscript and pointing out errors.
This work was supported by JSPS KAKENHI Grant Number 23KJ1325 and Graduate School of Science, Kyoto University under Ginpu Fund.

\end{document}